\documentclass[10pt,a4paper]{article}
\usepackage{biblatex}
\usepackage[margin=1.2in,width=150mm,top=35mm,bottom=25mm,bindingoffset=6mm]{geometry}
\usepackage[utf8]{inputenc}
\usepackage[english]{babel}
\usepackage{graphicx}
\usepackage{amsmath}
\usepackage{amsthm}
\usepackage{amsmath,amssymb} 
\usepackage{booktabs} 
\usepackage{listings} 
\usepackage{xcolor} 
\usepackage{courier} 
\usepackage{enumitem} 
\newtheorem{theorem}{Theorem}[section]

\newtheorem{lemma}[theorem]{Lemma}
\newtheorem{proposition}[theorem]{Proposition}
\newtheorem{remark}[theorem]{Remark}

\bibliography{ref}
\title{On Partition Functions for Time-Inhomogeneous Branching Random Walks}
\author{Qianrun Wu
  \thanks{Email: qianrun.wu@dauphine.eu}
  }

\begin{document}
\maketitle
\begin{abstract}
    We establish the phase transition and universality for the partition function of time-inhomogeneous branching random walks (BRWs) with decreasing variance increment, a model related to two-dimensional directed polymers. By modifying Berestycki’s GMC framework and adapting it to discrete BRWs, we prove that the critical value of time-inhomogeneous BRWs coincides with that of time-homogeneous ones, and the partition functions converge in $L^1$ in the subcritical regime. We also extend the universality at the critical parameter, showing the same decay rate of partition functions. Our approach reveals the potential for such framework beyond GMC, which provides a new technical path for other models, such as BRWs and Continuous Random Energy Model (CREM). Finally, we raise some open problems regarding the further implications of our results in other related models discussed in the present paper.
\end{abstract}

\section{Introduction} 
Branching random walks (BRWs) are a natural model in probability theory, motivated by their biological applications, that describe the evolution of a population of particles with spatial motion. Much is known about the phase transition of their partition function (the so-called Biggins martingale); see \cite{Shi2015} for precise statements and proofs. More recently, their time-inhomogeneous counterpart, first introduced in \cite{Ming-Zeitouni}, initiates the study of the asymptotic behaviors of BRWs with increments that exhibit a macroscopic change in variance, revealing differences in extreme value statistics. The series of works on this model raises a more fundamental question: How do environmental fluctuations impact the asymptotic behavior of BRWs? In the present paper, we attempt to answer this question from a qualitative perspective by studying its partition function.

The partition function of time-inhomogeneous BRWs with monotonically decreasing variance—our primary focus in the present paper—is motivated by and closely related to two-dimensional directed polymers. On the other hand, the model of Gaussian Multiplicative Chaos (GMC), originaed in \cite{Kahane} has long been viewed as a continuous analogue of BRWs, and numerous works connect these two models: for instance, many results from one often inspire analogous results for the other, and many approaches used in BRWs can be applied to those in GMC-type objects (see, for example \cite{2020} and \cite{gaussianfield} for some surveys of the connections between BRWs and two-dimensional Gaussian Free Field, a special case of GMC). In the present paper, we aim to proceed in the opposite direction: we use the framework in recent developments in GMC to further the study of BRWs.

We reveal the complexity of the partition function in time-inhomogeneous environments, a more intricate topic since the classical martingale structure fails and the correlation function lacks sufficient regularity—issues largely avoided in existing literature on BRWs and directed polymers. Fortunately, recent advances in GMC extend the theory to non-martingale constructions of GMC measures. In particular, Berestycki \cite{berestycki2017elementaryapproachgaussianmultiplicative} offers a novel perspective by analyzing the ``thick points'' (roughly, points with a prescribed growth rate in the approximation) that dominate the total mass.

Indeed, this framework has been applied to related objects in the GMC literature (with significant technical differences across distinct settings), including convergence of critical GMC under convolution approximation in \cite{Powell2018} and derivatives of subcritical GMC in \cite{jego2026derivativesgaussianmultiplicativechaos}. Our results demonstrate the broad applicability of the framework from \cite{berestycki2017elementaryapproachgaussianmultiplicative}, which extends to models beyond GMC.

Motivated by this framework, we give an alternative proof of the classical phase transition result for BRWs, carry out an analogous construction in the BRW setting, and establish universal behavior of the partition functions: in the subcritical regime, the time-homogeneous and time-inhomogeneous partition functions converge to the same non-degenerate random variable. These results imply that, when the system behaves regularly in early-time evolution, the fundamental asymptotic property of the partition function is determined solely by the underlying branching process and is robust to time inhomogeneity in weight increments. The core of our approach is that fluctuations of the partition function are dominated by the early generations of the BRW.

Indeed, such a finding has been verified in a closely related model, the continuous random energy model (CREM), first introduced in \cite{BOVIER2004481}, where a phase transition of free energy occurs with the critical parameter determined by the initial correlation. In the present paper, we further investigate this phase transition regarding the non-degeneracy of the limit, which serves as a direct consequence of the framework discussed herein (and will be elaborated on in Section \ref{CREM}), where we provide a characterization of the non-degeneracy of the limit in the weak-disorder regime under certain conditions on the correlation structure.

Looking ahead, results in this paper suggest that within the GMC framework, the correlation function of time-inhomogeneous BRWs is not log-correlated, implying that the GMC framework can be extended to more general settings. Additionally, our results on the time-inhomogeneity of increments align with a fundamental question in \cite{Universality} regarding how environmental fluctuations affect phase transitions (or other qualitative behavior) in disordered systems; see \cite{caravenna2026disorderedsystemscritical2d} for a recent survey on 2D stochastic heat flow (a continuous analogue of two-dimensional directed polymers) related to this question.

Heuristically, disordered systems are divided into two categories: disorder relevant and disorder irrelevant. Disorder relevance means arbitrarily small disorder (or random environment) changes the model’s large-scale qualitative behavior (e.g., critical exponent changes) and leads to a different scaling limit; disorder irrelevance means such changes occur only for sufficiently strong disorder. At the boundary (typically critical dimension), disorder perturbation has a more subtle effect, being either marginally relevant or irrelevant. This paper also shows that the partition function has the same asymptotic behavior as its time-homogeneous counterpart at the critical parameter.

\subsection{Outline:} In Section \ref{modelBRWs}, we introduce the model of BRWs (in time-homogeneous and time-inhomogeneous environments), state our main results, as well as the potential connections to related models, such as GMC/two-dimensional directed polymers and CREM. In Section \ref{Sub-Critical}, we will prove Theorem \ref{main2}. Then we will prove the upper bound of the critical parameter $\bar{\beta}_c$ in Section \ref{upper}, and together these results show that $\bar{\beta}_c=\beta_c$. Then we will discuss the application of our main result, from which we can see that Theorem \ref{Main1} serves as a direct consequence of the proof, and we can also extend the idea to the CREM model, as discussed in Proposition \ref{CREM}. In Section \ref{Critical}, we extend the universality to the critical parameter, showing the same polynomial decay rate of partition functions for time-inhomogeneous BRWs (Theorem \ref{main3}). Finally, we will discuss our results in context and raise some other potential directions for further study in Section \ref{Future}.

\subsection{Acknowledgment:} The author wishes to thank Clément Cosco for proposing this interesting model, as well as discussions and comments on the early stages of this work. The author is also very grateful to Jiaqi Liu and Bastein Mallein for discussions about these results in the broader context of modern research.

\section{Descriptions of Models and Main Results}\label{modelBRWs}

\subsection{The Model of Branching Random Walks} In the present paper, we consider a branching random walk on the real line $\mathbb{R}$. Initially, a particle sits at the origin. Its children form the first generation; their displacements from the origin correspond to a point process on the line (which could also be described by a Galton-Watson process, abbreviated as G-W process or G-W trees). These children have children of their own (who form the second generation) and behave—relative to their respective positions—like independent copies of the initial particle, and so on. We will focus on time-homogeneous and time-inhomogeneous Gaussian Branching Random Walks (BRWs) on a supercritical Galton-Watson tree $\mathbb{T}$ rooted at $o\in\mathbb{T}$ with mean offspring distribution equal to $\mathbb{E}[D_1]=d>1$ (we denote $D_n$ to be the particles at generation $n$), and the detailed definition of the model will be introduced below. In the rest of the paper, we will use the survival probability of the G-W tree:\begin{equation*}
     \mathbf{P}^*=\mathbf{P}(\cdot |\text{non-extinction)}
 \end{equation*}
 
 We assign to each vertex $\mathbf{x}\in\mathbb{T}$ an independent random variable $w(\mathbf{x})$, such that $\{w(\mathbf{x})\}_{\mathbf{x}\in\mathbb{T}}$ are i.i.d.\ $\mathcal{N}(0,1)$ (For $\sigma>0$, we denote $\mathcal{N}(\mu,\sigma^2)$ to be the Gaussian distribution with mean $\mu$ and variance $\sigma^2$.). For each $\mathbf{x}\in\mathbb{T}$, we write $|\mathbf{x}|=n$ if $v$ belongs to the $n$-th generation of the Galton–Watson tree, and we let $(x_1,\dots,x_n)$ be the unique geodesic path from $o$ to $v$.  Since the geodesic is unique we can refer to each branch at generation $n$ by the last vertex.

Let $W_n^\beta$ denote the (normalized) partition function for time-homogeneous BRWs at generation $n$, where $\beta\geq0$ is a fixed parameter (which is so-called the inverse temperature in statistical mechanics), defined by
\[
W_n^\beta=\frac{1}{d^n}\sum_{|\mathbf{x}|=n} e^{\beta H_n(\mathbf{x})-\frac{1}{2}\beta^2 n},
\]
where $H_n(\mathbf{x})=\sum_{i=1}^n w(x_i)$ is the Hamiltonian associated with the branch $\mathbf{x}=(x_1,\dots,x_n)$. It is well known that the family of random variables $(W_n^\beta)_{n\geq 0}$ is a unit-mean martingale, and thus converges almost surely to a nonnegative limit $W_\infty^\beta$. Applying Kolmogorov’s 0-1 law gives the usual dichotomy that exactly one of the events:\begin{equation*}
     \{W_\infty^\beta >0\} \text{ (weak disorder) and } \{W_\infty^\beta=0\} \text{ (strong disorder)}
\end{equation*}
has probability 1 (a similar argument also applies to the partition function of time-inhomogeneous branching random walks, a model will be introduced later). 

The following classical result of Biggins (see e.g.\ \cite{Shi2015} for a more general statement and proof) characterizes the non-degeneracy of $W_\infty^\beta$:

\begin{theorem}[Biggins Martingale Convergence Theorem]\label{Main1}
\textsc{(Phase Transition for Time-homogeneous BRWs)}
For a BRW on a supercritical Galton–Watson tree with mean offspring number $\mathbb{E}[D_1] = d > 1$, there exists a phase transition for the non-degeneracy of $W_\infty^\beta$, i.e.,
\[
\beta_c := \sup\Bigl\{\beta \geq 0 \mid \mathbb{P}\{W_\infty^\beta = 0\} > 0\Bigr\}
\]
is well-defined and equals $\sqrt{2\log d}$.
\end{theorem}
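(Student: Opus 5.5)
Note that the definition as literally written has a slip: $\beta_c$ should be the supremum of $\beta$ with $\mathbb{P}\{W_\infty^\beta>0\}>0$, i.e.\ the weak-disorder region. Since by the 0-1 dichotomy and monotonicity in $\beta$ the weak-disorder set is an interval $[0,\beta_c)$, what must be shown is: $W_\infty^\beta>0$ ($\mathbf{P}^*$-a.s.) for $\beta<\sqrt{2\log d}$, and $W_\infty^\beta=0$ a.s.\ for $\beta\ge\sqrt{2\log d}$. We will prove these two directions separately, in the spirit of Berestycki's thick-point argument rather than the classical spine/size-biasing route.

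\emph{Subcritical regime, $\beta<\sqrt{2\log d}$.} Non-degeneracy of $W_\infty^\beta$ will follow from uniform integrability of $(W_n^\beta)$: then $\mathbb{E}W_\infty^\beta=1$, so $\mathbb{P}(W_\infty^\beta>0)>0$, and the 0-1 law upgrades this to probability one on survival. To get uniform integrability, fix $\alpha\in(\beta,\sqrt{2\log d})$. Call $\mathbf{x}$ with $|\mathbf{x}|=n$ \emph{good} if $H_k(\mathbf{x})\le \alpha k + C$ for all $k\le n$, and split $W_n^\beta=W_n^{\beta,\mathrm{good}}+W_n^{\beta,\mathrm{bad}}$. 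Under the Girsanov tilt by $e^{\beta H_n-\beta^2n/2}$, a uniformly chosen branch has drift $\beta<\alpha$, so $\mathbb{E}W_n^{\beta,\mathrm{bad}}\le \mathbb{P}(\exists k: B_k+\beta k>\alpha k+C)$ for a Gaussian walk $B$, which tends to $0$ as $C\to\infty$ uniformly in $n$. For the good part, bound the second moment: decompose pairs $(\mathbf{x},\mathbf{y})$ by the generation $k$ of their most recent common ancestor. The pair contributes roughly $d^{-k}e^{\beta^2k}$, but on the good event the shared Hamiltonian is capped at $\alpha k+C$, and Gaussian tilting yields the improved bound $d^{-k}\exp\bigl(\beta^2k-(\text{gain from truncation})\bigr)$. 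Optimising gives an exponent of the form $k\bigl(2\alpha\beta-\alpha^2/2-\log d\bigr)$ (after adjusting $\alpha$ appropriately), which is negative when $\alpha$ is close to $\beta$ and $\beta<\sqrt{2\log d}$. Summing over $k$ then gives $\sup_n\mathbb{E}(W_n^{\beta,\mathrm{good}})^2<\infty$, hence uniform integrability. The calculation choosing $\alpha$ so that both the Girsanov estimate and the second-moment exponent work is the main obstacle. The cleanest choice is $\alpha$ with $\beta<\alpha$ and $\alpha^2/2<\log d$, i.e.\ any $\alpha\in(\beta,\sqrt{2\log d})$; the exponent must then be verified carefully, possibly with a $k$-dependent cap. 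The offspring randomness only enters through $\mathbb{E}[D_1(D_1-1)]$, and we will assume a second moment for $D_1$, or truncate it otherwise.

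\emph{Supercritical and critical regime, $\beta\ge\sqrt{2\log d}$.} We will show $W_n^\beta\to0$ in probability. For $\beta>\sqrt{2\log d}$, a fractional moment suffices: for $s\in(0,1)$, subadditivity gives $\mathbb{E}(W_n^\beta)^s\le d^{n(1-s)}e^{n(s^2-s)\beta^2/2}$. The exponent $(1-s)\bigl(\log d-s\beta^2/2\bigr)$ is negative for $s$ near $1$, so $\mathbb{E}(W_n^\beta)^s\to0$ exponentially. At $\beta=\sqrt{2\log d}$ we instead use the thick-point picture. Under the tilt, the typical branch has $H_k\approx\beta k$, but the first-moment bound shows that no particle exceeds $\sqrt{2\log d}\,k+O(\log k)$, so for large $n$ almost all the tilted mass sits on branches that are impossible. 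Concretely, split on the event $\{H_k(\mathbf{x})\le\beta k+L\ \forall k\le n\}$. The bad part is small in probability because, by a union bound, no such particle exists with high probability as $L\to\infty$. The good part has expectation equal to the probability that a random walk stays below $L$ up to time $n$, which is $O(L/\sqrt n)\to0$ by the ballot theorem. Together these give $W_\infty^\beta=0$ and hence $\beta_c=\sqrt{2\log d}$.
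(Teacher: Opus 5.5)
Your proposal is correct and is essentially the paper's own argument, and you read the definition of $\beta_c$ correctly as the supremum of the weak-disorder set: the paper likewise truncates to $H_T<\alpha T$ (for $T\ge n_0$ rather than with an additive $C$), makes the bad part small in $L^1$ uniformly in $n$ by Girsanov, bounds the truncated second moment to get uniform integrability, and uses the same subadditive fractional-moment bound for $\beta>\sqrt{2\log d}$, whereas your treatment of $\beta=\sqrt{2\log d}$ is an extra that the supremum does not need (if you keep it, count only first crossings of $\beta k+L$, or note that at criticality such a crossing forces $W_k^\beta>e^{\beta L}$ and apply Doob's inequality, because the naive sum $\sum_{k\le n}d^k\,\mathbb{P}(H_k>\beta k+L)$ grows like $e^{-\beta L}\sqrt n$). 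Do fix one algebra slip: for $\alpha\le 2\beta$ the second-moment exponent is $\beta^2-\tfrac12(2\beta-\alpha)^2-\log d=2\alpha\beta-\tfrac12\alpha^2-\beta^2-\log d$ (you dropped the $-\beta^2$, and your version is positive near $\alpha=\beta$ once $\beta^2>\tfrac23\log d$), which tends to $\tfrac12\beta^2-\log d<0$ as $\alpha\downarrow\beta$ and is in fact negative on all of $(\beta,\sqrt{2\log d})$, so no $k$-dependent cap is needed.
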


The classical proof of the previous result (see, for example, Chapter 5 of \cite{Shi2015}) relies on a so-called ``spine method'' (or ``spine decomposition''), an extensively used technique in the branching process literature, where one uses a change-of-measure argument and tilts the measure towards a particle chosen uniformly at random. Leveraging the martingale property, it turns out that under the tilted measure, the chosen particle dominates the total mass. 

\subsection{Gaussian multiplicative Chaos}

Inspired by the long-standing relationship between BRWs and Gaussian Multiplicative Chaos (GMC), we provide a new proof of this result in the present paper by applying the analogous framework and approach from Berestycki’s paper \cite{berestycki2017elementaryapproachgaussianmultiplicative} regarding the convergence of sub-critical GMC—this proof does not rely on the martingale property. The GMC framework in \cite{berestycki2017elementaryapproachgaussianmultiplicative} is chosen due to its natural connection with BRWs (BRWs serve as the discrete analogue of continuous GMC) and its martingale-free property, which perfectly aligns with time-inhomogeneous BRWs (which lack a martingale structure) and will be introduced later in this section.

The relationship between BRWs and GMC can be briefly described as follows (roughly speaking, we embed a binary BRWs into the unit interval, which yields so-called ``multiplicative cascades'', a preliminary version of GMC): First, let $\tilde{I}$ denote the infinite union of sub-intervals of $[0,1]$ defined by
\begin{equation*}
\tilde{I} = \bigcup_{m \geq 1} \tilde{I}_m, \text{ where } \tilde{I}_m:= \bigcup_{i=0}^{2^m - 1} \left[ \frac{i}{2^m}, \frac{i+1}{2^m} \right],
\end{equation*}
Then for any $I \in \tilde{I}_k$ (with $k \in \mathbb{N} \cup \{0\}$), set $W_I = e^{\beta W_I' - \frac{\beta^2}{2}}$ for some $\beta>0$, where $W_I' \sim \mathcal{N}(0,1)$ and the $W_I'$ are independent across distinct $I \in \tilde{I}$. For any point $x \in [0,1]$, define $w_n(x) = \prod_{I \in \tilde{I}_n, \, x \in I} W_I$. We then introduce the random measure $\mu_n:=\mu(n, \beta)$ satisfying: $\frac{\text{d}\mu_n}{\text{d}\lambda}  = w_n$, where $\lambda$ is the Lebesgue measure on $[0,1]$. For each $n \geq 1$, $\mu_n$ defines a random measure on $[0,1]$, and by independence of $W_I$ we can see that $\mathbb{E}[\mu_n([0,1])] =\prod_{I\in \tilde{I}_n}\mathbb{E}[W_I]= 1$ for all $n$. This construction can be extended to GW trees via analogous ideas (according to each sampling of G-W tree), which will be used in the proof of Theorem \ref{main3}.

The modern construction of GMC, originates from Kahane \cite{Kahane}, can be roughly described as a random measure in a given domain $D\subset\mathbb{R}^d$, associated to a centered Gaussian field $h$ (a generalized function, not defined point-wise) and a constant parameter $\gamma>0$. Subsequently, such measures were generalized to a continuous setting on $\mathbb{R}^d$, yielding a Borel measure on a given domain $D \subset \mathbb{R}^d$ of the form
\begin{equation}\label{GMC measure}
\mu(dx) = e^{\gamma h(x) - \frac{1}{2}\gamma^2 \mathbb{E}[h(x)^2]} \, \sigma(dx).
\end{equation}
In the expression above:\begin{itemize}
    \item $h$ is a centered Gaussian generalized random field with covariance kernel $K(x,y) = \log |x - y|^{-1} + g(x,y)$, where $g(x,y)$ is continuous on $\bar{D} \times \bar{D}$;
    \item $\gamma > 0$ is a fixed constant;
    \item $\sigma$ is a Radon measure on $\mathbb{R}^d$, serving as the reference measure.
\end{itemize}

In Berestycki’s paper \cite{berestycki2017elementaryapproachgaussianmultiplicative}, the author provides an elementary proof for the convergence of approximating measures $\mu_\epsilon$ constructed via the convolution of $h(x)$ with certain regularizing kernels $\theta_\epsilon$ to a non-trivial limiting measure $\mu$ in the sub-critical regime (specifically, when the parameter satisfies $\gamma<\sqrt{2d}$) and further demonstrates that this limiting measure $\mu$ is independent of the choice of regularizing kernels. The GMC framework with thick point analysis, i.e. the points $x\in D$ satisfying:\begin{equation*}
    \lim_{\epsilon\to 0} \frac{X_\epsilon(x)}{|\log 1/\epsilon|}=\gamma 
\end{equation*}
which was shown in \cite{berestycki2017elementaryapproachgaussianmultiplicative} Lemma 3.4 that they dominates the total GMC measure, and measure tilting argument used in that paper will serve a more suitable technical tool for phase transition analysis for BRWs in time-inhomogeneous environments, which will be the main focus in the present paper. The later model was initially discussed in Fang and Zeitouni's paper \cite{Ming-Zeitouni} and drew attention from researchers in multiple related areas. In particular, recent study in \cite{cosco2025centrallimittheoremlogpartition} suggests that there is an analogy between two-dimensional directed polymers and BRWs with monotone decreasing variance, which serves as one of the primary motivation for the present paper. 

\subsection{Two-dimensional Directed Polymers and Time-inhomogeneous BRWs}

Directed polymers in random environment describe the behavior of a long directed chain of monomers in presence of random impurities, see \cite{FC16} for a comprehensive introduction to this area. In the setting related to our discussions, the trajectory of the polymer is given by a nearest-neighbor path $(S_n)_{n\in\mathbb{N}}$ on two-dimensional lattice, while the impurities (also called the environment) are given by i.i.d.\ centered random variables $\omega(n,x)$, $n\in \mathbb{N}, x\in \mathbb{Z}^d$ of variance one satisfying the free energy $\lambda(\beta)=\log \mathbb{E}[e^{\beta \omega(n,x)}]\in(0,\infty)$ for all $\beta >0$, and define $W_N(x,\beta) = \mathbb{E}_x\left[e^{\sum_{n=1}^N \left(\beta \omega(n,S_n)-\lambda(\beta)\right)}\right]$ to be the partition function for paths start at $x$. In the seminal paper \cite{Universality}, Caravenna, Sun and Zygouras established the convergence of the spatial behavior of two-dimensional (re-scaled) partition functions to a Gaussian random variable (see \cite{Universality}, Theorem 2.5 for a precise statement).  

In \cite{cosco2025centrallimittheoremlogpartition}, the authors showed that when the environment are i.i.d. standard Gaussian (i.e. having law $\mathcal{N}(0,1)$), the normalized partition function can be heuristically viewed as a branching random walk with decreasing variance (where each branch corresponds to a point $x\in\mathbb{Z}^d$), satisfy that $X_k(x)$ and $X_k(y)$ are correlated for small $|x-y|$ ($x,y\in\mathbb{Z}^2$), and decorrelate as $|x-y|$ grows large. Consequently, the asymptotic behavior of the partition function should be governed by a branching random walk model with decreasing variance discussed in the present paper. 

In time-inhomogeneous environment, the weight increment of any branch $\mathbf{|x|}=n$ exhibits a macroscopic variation: let $f:[0,1]\to [0,1]$ be continuous at 0, monotone decreasing function satisfying $f(0)=1$ and $f(1)=0$. For each generation in G-W tree $h\in \{0,...,n\}$, the weight increment at step $h$ follows the distribution of $\sqrt{f(\frac{h}{n})}w(x_h)$. In other words, the Hamiltonian of a branch $\mathbf{x}=\{x_1,...,x_n\}$, denoted by $\bar{H}_n(\mathbf{x})$, is given by: $\bar{H}_n(x)=\sum_{i=1}^{n} \sqrt{f(\frac{i}{n})}w(x_i)$, so that $\bar{H}_n(\mathbf{x})\sim\mathcal{N}(0,\sum_{i=1}^{n} f(\frac{i}{n}))$. Analogous to the normalized partition function for the BRW with homogeneous weight increments, we define the normalized partition function for the time-inhomogeneous BRW of length $n$, denote $\bar{W}_n^\beta$, as follows:\begin{equation*}
     \bar{W}_n^\beta=\frac{1}{d^n}\sum_{\mathbf{|x|}=n} e^{\beta \bar{H}_n(x)-\frac{\beta^2}{2}\sum_{i=1}^{n} f(\frac{i}{n})w(x_i)}
 \end{equation*}
However, unlike $(W_n^\beta)_{n\geq 0}$ in time-homogeneous environment, the partition function for time-inhomogeneous environment is not a martingale (due to the non-constancy of $f$), so the convergence of $(W_n^\beta)_{n\geq 0}$ is not automatic. 

In the present paper, we will establish the following stronger result: when $\beta<\beta_c$, the difference $\bar{W}_n^\beta-W_n^\beta\to 0$ in the $L^1$-norm, which in turn implies convergence to 0 in probability. Notably, the limit of $\bar{W}_n^\beta$ ,denoted by $\bar{W}_\infty^\beta$, follows the same law as $W_\infty^\beta$-the a.s. limit of partition function in time-homogeneous weights. As a direct consequence, we can see that $\bar{W}_\infty^\beta$ is almost surely non-negative when $\beta<\sqrt{2\log d}$:

\begin{theorem} \label{main2}
\textsc{(Phase Transition and Universality for Time-inhomogeneous BRW)}
Let $f:[0,1]\to[0,1]$ be a continuous, non-increasing function with $f(0)=1$. Define the normalized partition function for time-inhomogeneous weights by
\[
\bar{W}_n^\beta=\frac{1}{d^n}\sum_{|\mathbf{x}|=n} e^{\beta \bar{H}_n(\mathbf{x})-\frac{\beta^2}{2}\sum_{i=1}^n f\!\left(\frac{i}{n}\right)},
\]
where
\[
\bar{H}_n(\mathbf{x})=\sum_{i=1}^n \sqrt{f\!\left(\frac{i}{n}\right)}w(x_i)
\]
is the Hamiltonian for a branch $\mathbf{x}$ with $|\mathbf{x}|=n$. Then $\bar{W}_n^\beta$ exhibits a phase transition for its limit in probability (whose existence is established separately), with critical value
\[
\bar{\beta}_c := \sup\Bigl\{\beta \geq 0 \mid \mathbb{P}\{\bar{W}_\infty^\beta = 0\} > 0\Bigr\},
\]
which is well-defined and equals $\sqrt{2\log d}$ (coincides with that of the homogeneous model $W_n^\beta$, i.e., $\beta_c = \bar{\beta}_c$). Moreover, the following universality in $L^1$ limit holds:
\[
\mathbb{E}\bigl[|W_n^\beta - \bar{W}_n^\beta|\bigr] \to 0
\quad\text{as }n\to\infty,\text{ for all }\beta<\beta_c=\sqrt{2\log d}.
\]
\end{theorem}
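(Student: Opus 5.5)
The plan is to couple the two models on the same tree and the same i.i.d.\ weights $\{w(\mathbf{x})\}$ and show that $W_n^\beta-\bar W_n^\beta\to0$ in $L^1$ for $\beta<\sqrt{2\log d}$. Together with Theorem \ref{Main1} (a.s.\ convergence of $W_n^\beta$ to $W_\infty^\beta$), this gives convergence in probability of $\bar W_n^\beta$ to $\bar W_\infty^\beta:=W_\infty^\beta$. Hence $\bar W_\infty^\beta$ is non-degenerate whenever $\beta<\beta_c$, which yields $\bar\beta_c\ge\sqrt{2\log d}$. The reverse inequality $\bar\beta_c\le\sqrt{2\log d}$ is handled separately by a first-moment/thick-point argument.

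\textbf{Step 1: early/late truncation.} Fix a small $\delta>0$ and split each branch at generation $k=\lfloor\delta n\rfloor$. Conditioning on $\mathcal F_k$ (the tree and the weights up to generation $k$) gives
\begin{equation*}
\mathbb E[\bar W_n^\beta\mid\mathcal F_k]=\frac1{d^k}\sum_{|\mathbf y|=k}e^{\beta\bar H_k^{(n)}(\mathbf y)-\frac{\beta^2}2\sum_{i\le k}f(i/n)},
\end{equation*}
and similarly $\mathbb E[W_n^\beta\mid\mathcal F_k]=W_k^\beta$. Since $f$ is continuous at $0$ with $f(0)=1$, on $i\le\delta n$ we have $f(i/n)\in[1-\eta(\delta),1]$ with $\eta(\delta)\to0$. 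A Gaussian $L^2$ computation over pairs of branches, which uses $\beta^2<2\log d$ so that the pair-sum converges, then shows that $\mathbb E|\mathbb E[\bar W_n^\beta\mid\mathcal F_k]-W_k^\beta|$ is small. To get this uniformly in $k$, I would instead truncate at a fixed $K$ rather than at $\delta n$: compare the two at generation $K$, where $f(i/n)\to1$ for each $i\le K$ as $n\to\infty$.

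\textbf{Step 2: concentration of the late part.} The main task is to show
\begin{equation*}
\mathbb E\bigl|\bar W_n^\beta-\mathbb E[\bar W_n^\beta\mid\mathcal F_K]\bigr|\to0
\end{equation*}
as $K\to\infty$, uniformly in $n$, and likewise for $W_n^\beta$ (for the homogeneous model this is just martingale convergence). Naive second moments fail once $\beta^2>\log d$. Following Berestycki, I would therefore restrict to \emph{good} branches, those whose partial sums satisfy $\bar H_m(\mathbf x)\le \alpha\sum_{i\le m}f(i/n)$ for all $K\le m\le n$, with $\alpha\in(\beta,\sqrt{2\log d})$.

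First, a Girsanov/tilting argument shows that the expected mass of bad branches is small. Under the size-biased measure, the spine is a random walk with drift $\beta f(i/n)$ and variance $f(i/n)$, so exceeding slope $\alpha>\beta$ after time $K$ has probability tending to $0$ as $K\to\infty$, uniformly in $n$. Here the decreasing variance only helps.

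Second, for good branches, the conditional variance given $\mathcal F_K$ is bounded by a sum over branching times $m\ge K$. Each term is of the form
\begin{equation*}
d^{-m}\,e^{\beta\alpha v_m+\dots}\,e^{\beta^2v_m}\cdot\text{(restriction)},\qquad v_m=\sum_{i\le m}f(i/n),
\end{equation*}
and the restriction to good branches effectively bounds it by $\exp\bigl(-(\log d-\beta\alpha+\beta^2/2)\,m+(\ldots)v_m\bigr)$. Since $v_m\le m$ and the exponent coefficient is negative when $\alpha$ is close to $\beta<\sqrt{2\log d}$, the sum is summable with a tail that vanishes as $K\to\infty$.

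\textbf{Main obstacle.} I expect the hardest step to be this uniform-in-$n$ second moment for good branches. One must check that every exponent appears with a coefficient that is monotone in $v_m$, so that the bound with $f\equiv1$ dominates the inhomogeneous case. It is also not obvious how to handle the regime where $f(i/n)$ is nearly $0$ near the end, where the tilt is weak. Combining Steps 1 and 2 via the triangle inequality yields $\limsup_n\mathbb E|W_n^\beta-\bar W_n^\beta|\le \varepsilon(K)\to0$.

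\textbf{Upper bound.} For $\beta>\sqrt{2\log d}$ I would show $\bar W_n^\beta\to0$ in probability via a truncated first moment. With high probability no branch at an early time $\delta n$ exceeds $\sqrt{2\log d}\,v_{\delta n}(1+o(1))$, and on that event the conditional expectation is exponentially small, since the early segment behaves homogeneously.
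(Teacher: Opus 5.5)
Your proposal is correct, but it gets the $L^1$ universality from a different decomposition than the paper's.

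\textbf{Lower bound.} The paper also truncates to good branches and removes the bad mass by the same Girsanov estimate. It then compares the two models directly in $L^2$: it shows that $\mathbb{E}[J_n^2]$, $\mathbb{E}[\bar J_n^2]$ and the cross moment $\mathbb{E}[J_n\bar J_n]$ all converge to the same series $\sum_h d^{-h}e^{\beta^2 h}g^\alpha(h)$. This requires identifying the limiting tilted good-event probabilities $g^\alpha(h)$ uniformly in $h\le n_1$ (Lemmas \ref{JNConvergence} and \ref{GNConvergence}).

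You instead condition on $\mathcal{F}_K$ for a fixed $K$. By conditional independence of the generation-$K$ subtrees, the post-$K$ fluctuation of each model is bounded by the tail $\sum_{m\ge K}$ of the good-restricted pair series. The two models are then compared only through their $\mathcal{F}_K$-conditional expectations, which is a finite-dimensional continuity statement because $f(i/n)\to1$ for $i\le K$. This avoids cross moments and $g^\alpha(h)$ altogether. Your Step 2 with $f\equiv1$ also gives the homogeneous uniform integrability without quoting Biggins. What the paper's route gives in addition is the explicit limit of the second moments of the good parts.

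\textbf{Your main obstacle.} The step you flag closes at once. The inequality $e^{2\beta\bar H_m}\mathbf{1}\{\bar H_m\le\alpha v_m\}\le e^{\beta\alpha v_m}e^{\beta\bar H_m}$ bounds the pair term by a constant (depending on the offspring law) times $d^{-m}e^{(\beta\alpha-\beta^2/2)v_m}$. The coefficient of $v_m$ is positive, so $v_m\le m$ gives domination by the homogeneous series. That series is summable because $\beta\alpha-\beta^2/2<\beta\sqrt{2\log d}-\beta^2/2\le\log d$. The small values of $f$ late in time are harmless: the bad-mass bound sees the tilted, time-changed walk only through $v_K\to\infty$.

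\textbf{Upper bound.} Your truncated first moment replaces the paper's fractional-moment argument (Cauchy--Schwarz plus Jensen) and works, provided the truncation is used inside the expectation. Take $\sqrt{2\log d}<a<\beta$; then $\mathbb{E}\bigl[d^{-k}\sum_{|\mathbf{y}|=k}e^{\beta\bar H_k(\mathbf{y})-\frac{\beta^2}{2}v_k}\mathbf{1}\{\bar H_k(\mathbf{y})\le a v_k\}\bigr]=\mathbb{P}\{\mathcal{N}(\beta v_k,v_k)\le a v_k\}$, and you finish with Markov's inequality. A pathwise bound on the event that uses only the maximum gives $d^{-k}D_k e^{(\beta a-\beta^2/2)v_k}$. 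That is not small unless $\beta>2a$, so the pathwise reading would fail for $\beta$ just above $\sqrt{2\log d}$.
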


Finally, the exact asymptotic behavior at criticality was established in the influential work of Hu and Shi \cite{HS09},who proved that at the critical parameter $\beta_c$, $W_n^\beta$ possess the asymptotic behavior:\begin{equation*}
    W_n^\beta=n^{-\frac{1}{2}+o(1)} \text{ a.s. } 
\end{equation*}
 Our next theorem shows that the universality principle extends to the critical parameter. This result was also motivated by the paper \cite{alberts2012nearcriticalscalingwindowdirected},which studies the decay rate of partition function near criticality:
 
 \begin{theorem} \label{main3}[Asymptotics of partition function at criticality]
    At the critical parameter $\beta_c$, we have $\bar{W}_n^{\beta_c}=n^{-\frac{1}{2}+o(1)}$ \text{almost surely}. 
\end{theorem}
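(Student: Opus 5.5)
We prove matching upper and lower bounds, $\bar{W}_n^{\beta_c}\le n^{-1/2+o(1)}$ and $\bar{W}_n^{\beta_c}\ge n^{-1/2-o(1)}$ almost surely on survival, by comparing $\bar{W}_n^{\beta_c}$ with homogeneous objects to which the result of Hu and Shi \cite{HS09} applies. The guiding idea is the same as in Theorem \ref{main2}: the behaviour of the partition function is decided in the early generations, where $f(i/n)\approx 1$ because $f$ is continuous at $0$ with $f(0)=1$. The difference from the subcritical case is that at $\beta_c$ the relevant scale is no longer $O(1)$ generations. The $n^{-1/2}$ factor comes from a ballot-type constraint that the maximal path stays below the line $\beta_c i$ over the whole time range, so the mesoscopic window must be chosen with care.

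\emph{Step 1 (upper bound).} Fix $\epsilon>0$ and $\delta>0$ with $f\ge 1-\epsilon$ on $[0,\delta]$. Split each branch at generation $m=\lfloor \delta n\rfloor$ and condition on $\mathcal{F}_m$:
\[
\bar{W}_n^{\beta_c}=\sum_{|\mathbf{y}|=m} d^{-m} e^{\beta_c \bar H_m(\mathbf{y})-\frac{\beta_c^2}{2}\sum_{i\le m} f(i/n)}\, \bar{W}^{(\mathbf{y})}_{m,n},
\]
where the $\bar{W}^{(\mathbf{y})}_{m,n}$ are independent, have unit mean, and are partition functions of the remaining steps, whose variances are at most $1-\epsilon'$ after time $\delta n$ when $f$ is strictly below $1$ there. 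For these remaining steps the effective inverse temperature is strictly subcritical. Using the convexity of $\beta\mapsto \log \mathbb{E}[\cdots]$ and a truncated first moment (the standard barrier argument: on the event $\max_{|\mathbf{y}|\le m}(\beta_c \bar H - \tfrac{\beta_c^2}{2}\sum f)\le C\log n$, which holds eventually a.s. by the first-moment bound on the maximum), we bound the prefactor sum by $n^{-1/2+o(1)}$. This is the Hu--Shi upper bound adapted to variances $f(i/n)\in[1-\epsilon,1]$, obtained by a Gaussian change of time. The subcritical tails then contribute at most $n^{o(1)}$ by Markov's inequality plus Borel--Cantelli along $n$ and monotone interpolation.

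\emph{Step 2 (lower bound).} Restrict the sum to paths that, during $[0,m]$, stay in the Hu--Shi ``good'' tube: those with $\beta_c H_i - \tfrac{\beta_c^2}{2}i\in[-i^{1/2+\eta},-\log n]$. The homogeneous mass carried by these paths is already $n^{-1/2-o(1)}$. The contribution from later generations is bounded below by the positive limit of a subcritical inhomogeneous partition function, via Theorem \ref{main2} applied to the time-shifted profile. The discrepancy between the $f$-weights and the homogeneous weights on $[0,m]$ is handled by comparing $\bar H_m$ with $H_m$ through the coupling $w(x_i)$, which costs a factor $e^{O(\sqrt{\epsilon}\, \beta_c |H_m|)}$. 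This factor is $n^{O(\sqrt\epsilon)}$ on the tube, and we then let $\epsilon\to 0$.

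The main obstacle is the choice of window. If $m=\delta n$, the variance discrepancy $\sum_{i\le m}(1-f(i/n))$ need not be $o(\log n)$, so the coupling error in Step 2 can exceed $n^{o(1)}$. We therefore expect to take $m=m_n$ with $\sum_{i\le m_n}(1-f(i/n))=o(\log n)$ yet $m_n\to\infty$. In that regime the Hu--Shi asymptotic $n^{-1/2}$ must be replaced by the barrier estimate over horizon $n$ with variance profile $f$. I would first attempt to prove a ballot lemma for a Gaussian walk with decreasing variance, showing that the probability of staying below a linear barrier is still $n^{-1/2+o(1)}$ via a time change $t=\sum_{i\le k} f(i/n)$, and then run the classical Hu--Shi first/second moment argument in the time-changed clock.
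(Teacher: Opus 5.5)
\textbf{Assessment: genuine gap.} Your plan fails at the step you yourself flagged as the main obstacle, and the failure is structural rather than technical.

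\textbf{Why the ballot estimate cannot give $n^{-1/2}$.} The time change $t_k=\sum_{i\le k}f(i/n)$ straightens the variance but not the entropy term $k\log d$, which is indexed by generations. Write $\bar W_n^{\beta_c}=\sum_{|\mathbf x|=n}e^{-V(\mathbf x)}$ with $V(\mathbf x)=\sum_{i\le n}\bigl(\log d-\beta_c\sqrt{f(i/n)}\,w(x_i)+\frac{\beta_c^2}{2}f(i/n)\bigr)$. Under the size-biased measure of the many-to-one formula, $w(x_i)\sim\mathcal N(\beta_c\sqrt{f(i/n)},1)$. So the spine increment of $V$ has mean $\log d\,(1-f(i/n))\ge 0$ and variance $\beta_c^2 f(i/n)$. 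Equivalently, in the clock $t_k$ the critical barrier is $\beta_c t_k+\frac{\beta_c}{2}\sum_{i\le k}(1-f(i/n))$, not the linear one $\beta_c t_k$.

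\textbf{What this gives for $f(t)=1-t$.} By the first-passage-line argument, the minimum of $V$ over all vertices of generation at most $n$ is at most $-a$ with probability at most $e^{-a}$. On the complement, $\bar W_n^{\beta_c}$ coincides with its barrier-truncated version, whose mean is the probability that this drifted walk stays above $-a$ up to time $n$. That is exactly the quantity that produces $n^{-1/2}$ in Hu--Shi. For $f(t)=1-t$, the cumulative drift $\log d\,k^2/(2n)$ overtakes the $\sqrt k$ fluctuations at $k\asymp n^{2/3}$, so this probability is of order $n^{-1/3}$. More generally, for $1-f(t)\asymp t^\gamma$ one gets $n^{-\gamma/(1+2\gamma)}$. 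The usual second-moment argument then indicates $\bar W_n^{\beta_c}\asymp n^{-1/3}$.

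\textbf{Consequences for your steps.} Step 1's claim that the prefactor over $[0,\delta n]$ is $n^{-1/2+o(1)}$ fails for such $f$. No choice of window $m_n$ should be expected to give $n^{-1/2+o(1)}$. Your window analysis isolates the right quantity, $\sum_{i\le k}(1-f(i/n))$, but it must be compared with $\sqrt k$, not with $\log n$. The exponent is $-\frac12$ only when the crossover scale is $n^{1-o(1)}$, for example when $f\equiv 1$ on some $[0,\delta]$. So the statement itself appears to need such an extra hypothesis.

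\textbf{Comparison with the paper's route.} The paper reduces the almost-sure statement to $\mathbb E[(\bar W_n^{\beta_c})^\alpha]=n^{-\alpha/2+o(1)}$ and uses Kahane's inequality in both directions.
\begin{itemize}
\item The lower bound is sound and much cheaper than your Step 2. Since $\sum_{i\le h}f(i/n)\le h$, Kahane gives $\mathbb E[(\bar W_n^{\beta_c})^\alpha]\ge \mathbb E[(W_n^{\beta_c})^\alpha]$, with no tube or coupling needed.
\item Your tube also has a normalization slip: it should constrain $\beta_c H_i-\beta_c^2 i$, i.e.\ include $-i\log d$. And $\bar H_m-H_m=\sum_{i\le m}(\sqrt{f(i/n)}-1)w(x_i)$ is not $O(\sqrt\epsilon\,|H_m|)$.
\item The paper's upper bound does not get around the obstruction above. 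Kahane can only dominate $\mathbb E[(\bar W_n)^\alpha]$ by a model with pointwise smaller covariance, such as $\sqrt{1-\epsilon}\,\beta_c H$ on the early generations. That model is subcritical, and its fractional moments do not vanish.
\item The paper's subsequent identification with the critical homogeneous model over $\lfloor(1-\delta)n\rfloor$ generations is not legitimate.
\end{itemize}

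\textbf{What survives under $f\equiv 1$ on $[0,\delta]$.} Conditional Jensen at generation $\lfloor\delta n\rfloor$ gives $\mathbb E[(\bar W_n^{\beta_c})^\alpha]\le \mathbb E[(W_{\lfloor\delta n\rfloor}^{\beta_c})^\alpha]$. This is your Step 1 with an exactly homogeneous prefactor, and it gives the upper bound on moments.

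Even then, your almost-sure upgrade is not justified. $\bar W_n$ is neither a martingale nor monotone in $n$, because the whole profile $f(\cdot/n)$ moves with $n$, so monotone interpolation is unavailable. Markov bounds of order $n^{-c\epsilon}$ are not summable, so Borel--Cantelli along all $n$ does not apply either.
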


The polynomial decay rate for time-inhomogeneous BRWs at criticality originates from the universal domination of early generations, which is consistent with time-homogeneous BRWs and reflects the insensitivity of phase transition to variance inhomogeneity.

\subsection{Continuous Random Energy Model}

Indeed, while the property of $f$ in the statement of Theorem \ref{main2} was our primary motivation from two-dimensional directed polymers, from the proof in Section \ref{Sub-Critical} we can see that the restriction on $f$ in Theorem \ref{main2} could be relaxed (i.e., Theorem \ref{main2} holds for a generalized class of $f$): the proof will hold as long as $f:[0,1]\to[0,1]$ with $f(0)=1$, is continuous at $x=0$.

Motivated by the mild conditions on $f$, our result could also be extended to another closely related model: the continuous random energy model (CREM), which was introduced by Bovier and Kurkova in the paper \cite{BOVIER2004481} in 2004, which serves as another toy model of disordered systems exhibiting an infinitely hierarchical correlation structure; see also \cite{Ho23} and \cite{lee2025samplingcontinuousrandomenergy} for more recent studies of this model. The model is defined as a generalized version of binary time-homogeneous BRWs: denote by $\mathbb{T}_n$ the binary tree with depth $n$, and given $\mathbf{x},\mathbf{y}\in\mathbb{T}_n$, $C(\mathbf{x},\mathbf{y})$ is the depth of their most recent common ancestor (see the precise definition in the proof of Theorem \ref{l2inhomo}). The CREM is a centered Gaussian process indexed by the binary tree $\mathbb{T}_n$ of depth $N$ (or equivalently, on the hypercube $\{0,1\}^N$ in other literature, and it serves as a toy model of spin glass on $\{0,1\}^N$ in this case) with covariance function:
\begin{align*}
\text{Cov}\left(\mathbf{x},\mathbf{y}\right)= n\cdot A\left(\frac{C(\mathbf{x},\mathbf{y})}{n}\right), \quad \forall |\mathbf{x}|,|\mathbf{y}|=n.
\end{align*}
where $A:[0,1]\to [0,1]$ is a non-decreasing function satisfying $A(0)=0$ and $A(1)=1$, and the right derivative of $A$ at $0$, denoted by $\hat{A}'(0)$, is finite (i.e., $\hat{A}'(0)<\infty$).

To study the CREM, one of the key quantities is the normalized partition function defined as $Z_n^\beta$ (the quantity $\beta_c$ is sometimes referred to as the static critical inverse temperature of the CREM):
\begin{align}
\label{CREM par func}
Z_n^{\beta} := \frac{\sum_{|\mathbf{x}|=n} e^{\beta H_n(\mathbf{x})}}{\mathbb{E}[\sum_{|\mathbf{x}|=n} e^{\beta H_n(\mathbf{x})}]}
\end{align}

From the expression of the covariance function, we can see that it possesses a similar construction to time-inhomogeneous BRWs on binary trees. Indeed, one may view $A(x)$ as $\int_0^x f(t) dt$ in the time-inhomogeneous BRWs for any $x\in [0,1]$, which serves as the limit of the discrete sum of $f(\frac{i}{n})$ (whereas the case $A(x)=x$ corresponds to time-homogeneous BRWs). In the same paper, Bovier and Kurkova showed that there exists a critical parameter $\beta_c=\frac{\sqrt{2\log 2}}{\sqrt{\hat{A}'(0)}}$ such that the following phase transition occurs: heuristically speaking, for all $\beta< \beta_c$, the main contribution to the partition function $Z_n^\beta$ comes from an exponential number of the particles, and for all $\beta>\beta_c$, the maximum starts to contribute significantly to the partition function. 

However, to our knowledge, it has not yet been proven in the literature that $\beta_c$ serves as the critical parameter for the non-degeneracy of the limit of $Z_n^\beta$. Under the framework of the present paper, we can see that although the exact approximation terms of $Z_n^\beta$ are different from those of $\bar{W}_n^\beta$, the similarity in Gaussian increments and the covariance structure allows us to prove the convergence of the partition function under certain conditions on $A$:

\begin{proposition} [Convergence of CREM Partition function]\label{CREMPARFUN}
    Suppose for any $x\in [0,1]$, $A'(x)$ exists almost everywhere, satisfies $\sup_{x\in [0,1]}A'(x)\leq A'(0)$, and is continuous at $0$ (i.e., $A\in  C^0[0,1]\cap C^1(0)$). Then when $\beta<\beta_c=\frac{\sqrt{2\log 2}}{\sqrt{A'(0)}}$, we have $Z_n^\beta$ converges to $Z_\infty^\beta$ in $L^1$ and $Z_\infty^\beta$ has the same law as $W_\infty^\beta$.
\end{proposition}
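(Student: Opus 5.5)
We realize the CREM on the binary tree as a time-inhomogeneous BRW and then compare it with the homogeneous Biggins martingale, in the spirit of the proof of Theorem \ref{main2}.

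\textbf{Step 1: realization as a BRW.} Put $a_i := n\bigl(A(\tfrac{i}{n})-A(\tfrac{i-1}{n})\bigr)\ge 0$. Take i.i.d. $\mathcal{N}(0,1)$ weights $w(\mathbf{x})$ on the vertices and set $H_n(\mathbf{x})=\sum_{i=1}^n \sqrt{a_i}\,w(x_i)$. Then
\[
\mathrm{Cov}\bigl(H_n(\mathbf{x}),H_n(\mathbf{y})\bigr)=\sum_{i\le C(\mathbf{x},\mathbf{y})}a_i=nA\bigl(C(\mathbf{x},\mathbf{y})/n\bigr),
\]
and $Z_n^\beta=2^{-n}\sum_{|\mathbf{x}|=n}e^{\beta H_n(\mathbf{x})-\frac{\beta^2}{2}\sum_i a_i}$. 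This is exactly $\bar W_n$ with $f(i/n)$ replaced by $a_i$.

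By the mean value theorem and the hypotheses, $a_i\le \sup A'\le A'(0)$. Moreover, continuity of $A'$ at $0$ gives $a_i/A'(0)\to 1$ uniformly for $i\le \epsilon n$ as $\epsilon\to0$.

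Rescale by setting $\gamma:=\beta\sqrt{A'(0)}<\sqrt{2\log 2}$ and $g_i:=a_i/A'(0)\in[0,1]$. In these variables $Z_n^\beta$ is a time-inhomogeneous binary BRW at parameter $\gamma$ with variance profile $g_i$, where $g_i\approx 1$ for early generations and $g_i\le1$ everywhere. The comparison object is $W_n^\gamma$ with $d=2$.

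\textbf{Step 2: $L^1$ comparison.} We follow the proof of Theorem \ref{main2}, which by the remark preceding this proposition only uses $f\le 1$ and continuity of $f$ at $0$. We couple $Z_n^\beta$ with the homogeneous model on the same tree and the same weights, with $W_m^\gamma$ using weights $\sqrt{1}\,w$. Fix a small $\epsilon$ and split at generation $k=\lfloor\epsilon n\rfloor$ into an early part and a late part.

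\emph{Early part.} On generations $\le k$ the increments differ by the factor $\sqrt{g_i}$ versus $1$, with $|g_i-1|\le\delta(\epsilon)$. Restricting to $\gamma$-thick particles in the style of Berestycki (particles whose partial sums are not atypically large), we get second moment control because $\gamma<\sqrt{2\log2}$. A Gaussian $L^2$ computation on the truncated sums then bounds the difference by a quantity depending on $\delta(\epsilon)$, up to a negligible truncation error.

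\emph{Late part.} Conditionally on $\mathcal F_k$, each particle at generation $k$ carries a factor
\[
Y_{\mathbf{x}}=2^{-(n-k)}\sum_{\mathbf{y}\ge\mathbf{x}}e^{\gamma(\cdot)-\frac{\gamma^2}{2}(\cdot)}
\]
of conditional mean $1$. These factors are i.i.d. across $\mathbf{x}$, and the thick-point truncation gives them uniformly bounded truncated second moments, since $g_i\le 1$ keeps the variance no larger than in the homogeneous case. Hence $\sum_{|\mathbf{x}|=k}(\text{weight}_{\mathbf{x}})(Y_{\mathbf{x}}-1)\to 0$ in $L^1$ as $k\to\infty$, by a law of large numbers in which the truncated weights are spread over exponentially many particles. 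The same holds for the homogeneous model.

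\emph{Conclusion.} Combining the two parts, $\limsup_n\mathbb{E}|Z_n^\beta-W_n^\gamma|\le \eta(\epsilon)$, and letting $\epsilon\to0$ makes the right side vanish. Since $W_n^\gamma\to W_\infty^\gamma$ almost surely and in $L^1$ (Theorem \ref{Main1}, using uniform integrability in the subcritical regime), $Z_n^\beta$ converges in $L^1$ to a limit $Z_\infty^\beta$ with the law of $W_\infty^{\gamma}$. Note that the natural parameter of this limit is $\gamma=\beta\sqrt{A'(0)}$.

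\textbf{Main obstacle.} The hard step is the second-moment bound under thick-point truncation with an inhomogeneous profile. The truncation barrier must be chosen as a function of the cumulative variance $\sum_{j\le i}g_j$ rather than of $i$. One must then check that the resulting exponent
\[
\sum_i\Bigl(g_i\gamma^2-\text{(barrier gain)}\Bigr)-i\log 2
\]
stays negative uniformly. The plan is to dominate it termwise using $g_i\le1$, which reduces it to the homogeneous estimate already used for Theorem \ref{main2}.
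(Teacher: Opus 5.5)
\textbf{Verdict.} Your plan is sound and Step 1 is the reduction the paper intends, but Step 2 takes a different route from the paper's proof.

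\textbf{How the two routes differ.} The paper's sketch reruns the proof of Theorem \ref{Main2} on the whole tree. It truncates to the good environment over all $n$ generations and shows $\bar J_n-J_n\to0$ in $L^2$ by splitting the pair sum according to the depth $h$ of the last common ancestor. The terms with $h\le n_1$ need only $g_i\to1$ for each fixed $i$. The tail $h>n_1$ is uniformly small by the termwise domination $g_i\le 1$.

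You instead cut at generation $k=\lfloor\epsilon n\rfloor$, compare the generation-$k$ masses, and remove generations $k+1,\dots,n$ by conditional averaging over the $2^k$ ancestors. The averaging does work. Write $V_h=\sum_{i\le h}g_i$, $\bar H_k(\mathbf x)=\sum_{i\le k}\sqrt{g_i}\,w(x_i)$ and $\omega_{\mathbf x}=2^{-k}e^{\gamma\bar H_k(\mathbf x)-\frac{\gamma^2}{2}V_k}$, and take a barrier slope $\alpha\in(\gamma,2\gamma)$ close to $\gamma$. Then
$$\mathbb E\sum_{|\mathbf x|=k}\omega_{\mathbf x}^2\mathbf 1_{\{\bar H_k(\mathbf x)<\alpha V_k\}}\le\exp\bigl(k(\gamma^2-\tfrac12(2\gamma-\alpha)^2-\log 2)\bigr)\to0 .$$
Without truncation this quantity is $2^{-k}e^{\gamma^2V_k}$, which diverges for $\gamma>\sqrt{\log 2}$ and small $\delta$.

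\textbf{Two points to watch.}
\begin{itemize}
\item Your early step is in substance the paper's computation on $k$ generations. The bound $|g_i-1|\le\delta$ alone does not make the second moments close, because $e^{\gamma^2(V_h-h)}$ is far from $1$ once $h\gg1/\delta$. You therefore still need the split in $h$ and the analogue of Lemma \ref{GNConvergence}. That same computation, run over all $n$ generations, already absorbs the late part.
\item To keep the $Y_{\mathbf x}$ i.i.d.\ given $\mathcal F_k$, their truncation must be measured from generation $k$. If, as you say, the late step is to use only $g_i\in[0,1]$, that barrier needs an additive cushion, since for a general profile the late cumulative variance can grow arbitrarily slowly.
\end{itemize}
So the paper's route is more economical. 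Yours makes the mechanism explicit: $Z_\infty^\beta$ is visibly the limit of the early-generation mass, and the later generations merely average out.

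\textbf{The reduction.} Here you are more careful than the paper's sketch, and it matters.
\begin{itemize}
\item What Step 1 really uses is $A(t)-A(s)\le A'(0)(t-s)$. The mean value theorem gives this when $A$ is differentiable everywhere (or absolutely continuous), but not from almost-everywhere existence of $A'$ alone. A singular Cantor-type component can make $A(x)>A'(0)x$ and destroy $g_i\le 1$. The paper's sketch needs the same bound, so it should be stated as a hypothesis.
\item Granting it, $A'(0)\ge A(1)-A(0)=1$. Moreover $A'(0)=1$ forces $x-A(x)$, which is then non-decreasing and vanishes at $0$ and $1$, to be identically zero, i.e.\ $A(x)=x$.
\item Hence your identification of the limit as $W_\infty^{\gamma}$ on the binary tree, with $\gamma=\beta\sqrt{A'(0)}$, is the correct one. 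The statement's $W_\infty^\beta$ is correct only when $A(x)=x$.
\item Your explicit coupling (one tree and one family of weights for every $n$, with increments $\sqrt{a_i}\,w(x_i)$) is what gives meaning to the $L^1$ convergence of $Z_n^\beta$. Under it, $Z_\infty^\beta=W_\infty^\gamma$ almost surely.
\end{itemize}
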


Many other interesting results have been established for the CREM and its partition function, and we believe that it would be interesting to compare those results with the BRWs literature (including the results in the present paper).

\section{Proof of Main Results} 
\subsection{Proof of Theorem \ref{main2}}\label{Sub-Critical}

In this section, we establish the existence of critical parameters governing the phase transitions of the limit $\bar{W}_\infty^\beta$, denoted respectively $\bar{\beta_c}$, and further derive its exact value. 

\subsubsection{Lower Bound for Time-inhomogeneous Critical Parameter}

The lower bound follows from a similar idea in the construction of sub-critical regime of GMC measure in \cite{berestycki2017elementaryapproachgaussianmultiplicative}. Before we dive into the proof, we would like to illustrate a standard summation argument (the $L^2$-computation) used in our analysis, which usually forms the main technicality in various research in branching processes, and will be used repeatedly (in various forms) throughout the paper. For simplicity, we would first focus on our discussion in the time-homogeneous weights, and it would be straightforward to extend the computation to the time-inhomogeneous weights, by independence of increments. The key idea is to fix a branch $\mathbf{|x|}=n$ and sum over all possible depths for $|\mathbf{y}|=n$ at which the pair of branches $\mathbf{x}$ and $\mathbf{y}$ diverge. 

Within the BRWs framework, the covariance $\text{Cov}\left(H_n(\mathbf{x}), H_n(\mathbf{y})\right)$ for two branches  branches $\mathbf{x} = (x_1, \dots, x_n)$ and $\mathbf{y} = (y_1, \dots, y_n)$ depends exclusively on the depth at which they first diverge, denoted by $C(\mathbf{x},\mathbf{y})$:
\begin{equation*}
C(\mathbf{x},\mathbf{y}) 
:= \sup \left\{ k \geq 0 \mid x_k = y_k \right\}
\end{equation*}
with the convention that $C(\mathbf{x},\mathbf{y}) = 0$ if the set is empty. For such pair of branches satisfying $C(\mathbf{x},\mathbf{y})=h$, the covariance of their Hamiltonian is $\text{Cov}\left(H_n(\mathbf{x}), H_n(\mathbf{y})\right) = 2h$.

To illustrate the $L^2$ norm, we fix a branch $\mathbf{|x|}=n$ (noting there are expected $d^n$ such branches) and sum over all possible branching depths $h \in \{0, \dots, n\}$ relative to $\mathbf{x}$. For each fixed $\mathbf{x}$ and depth $h$, the number of branches $\mathbf{y}$ satisfying $C(\mathbf{x},\mathbf{y}) = h$ has expected value $d^{n-h}$, corresponding to the number of branches in a sub-Galton-Watson tree with   $(n-h)$ generations.

This structure allows us to rewrite the $L^2$-norm of $W_n^\beta$ as:
\begin{equation*}
    \mathbb{E}\left[(W_n^\beta)^2\right]=d^{-2n}\mathbb{E}\left[
    \sum_{\mathbf{|x|=n}}\sum_{h=0}^n \sum_{C(\mathbf{x,\mathbf{y}})=h}[e^{-n\beta^2+\beta (H_n(\mathbf{x})+H_n(\mathbf{y}))}]
    \right]
\end{equation*}
by independence of the law of GW tree and the weight functions, we have:
\begin{equation*}
    \mathbb{E}\left[(W_n^\beta)^2\right]=\sum_{h=0}^n d^{-h}\mathbb{E}[e^{-n\beta^2+\beta (H(\mathbf{x})+H(\mathbf{y}))}]
\end{equation*}
\begin{equation*}
   =\sum_{h=0}^n d^{-h}e^{-n\beta^2}e^{\beta^2 (n+h)}= \sum_{h=0}^n e^{h(\beta^2-\log d)}
\end{equation*}  
and finally we can see that the series converges if and only if $\beta^2-\log d<0$, i.e., $\beta<\sqrt{\log d}$. 

Then we will establish the existence of the $L^2$ critical parameter for the time-inhomogeneous weights, denoted $\bar{\beta}_2$:

\begin{theorem}\label{L^2}[Time-inhomogeneous $L^2$ Critical Parameter $\bar{\beta}_2$]

The $L^2$-critical parameter in time-inhomogeneous environment, defined by:
\begin{equation*}
\bar{\beta}_2 := \sup_{\beta \geq 0} \left\{ \sup_{n \geq 0} \mathbb{E}\left[ \left( W_n^\beta \right)^2 \right] < \infty \right\}
\end{equation*}
exists and equals to $\bar{\beta}_2 = \sqrt{\log d}$ (in particular, the $L^2$ critical parameters in both cases coincide).
\end{theorem}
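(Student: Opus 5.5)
We will compute the second moment of $\bar{W}_n^\beta$ exactly, repeating the decomposition used above for $W_n^\beta$ (the statement writes $W_n^\beta$, but it clearly concerns the time-inhomogeneous $\bar{W}_n^\beta$). Fix $|\mathbf{x}|=n$ and split the pairs $(\mathbf{x},\mathbf{y})$ according to the divergence depth $h=C(\mathbf{x},\mathbf{y})$. By independence of the tree and the weights, the expected number of such pairs is $d^{2n-h}$, up to the standard adjustment for the offspring second moment, which only affects constants; we assume $\mathbb{E}[D_1^2]<\infty$ as implicitly done above.

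For a pair with $C(\mathbf{x},\mathbf{y})=h$, the two Hamiltonians share the increments $\sqrt{f(i/n)}\,w(x_i)$ for $i\le h$ and are independent afterwards. This gives
\begin{equation*}
\mathbb{E}\Bigl[e^{\beta(\bar H_n(\mathbf{x})+\bar H_n(\mathbf{y}))-\beta^2\sum_{i=1}^n f(i/n)}\Bigr]=\exp\Bigl(\beta^2\sum_{i=1}^h f\bigl(\tfrac{i}{n}\bigr)\Bigr).
\end{equation*}
Hence
\begin{equation*}
\mathbb{E}\bigl[(\bar W_n^\beta)^2\bigr]\asymp\sum_{h=0}^n \exp\Bigl(\sum_{i=1}^h\bigl(\beta^2 f(\tfrac{i}{n})-\log d\bigr)\Bigr).
\end{equation*}

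\emph{Upper bound, $\beta<\sqrt{\log d}$.} Since $0\le f\le 1$, each summand is at most $e^{h(\beta^2-\log d)}$. The sum is therefore dominated by the convergent geometric series from the homogeneous computation, uniformly in $n$. So $\sup_n\mathbb{E}[(\bar W_n^\beta)^2]<\infty$ and $\bar\beta_2\ge\sqrt{\log d}$. Monotonicity of $f$ is not even needed here.

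\emph{Lower bound, $\beta>\sqrt{\log d}$.} This is the main step, since $f$ decays and the large-$h$ terms are damped, so we cannot compare directly with the homogeneous series. We use continuity of $f$ at $0$ with $f(0)=1$. Choose $\delta>0$ such that $\beta^2 f(t)-\log d\ge\eta>0$ for $t\in[0,\delta]$. Keeping only the term $h=\lfloor\delta n\rfloor$ gives
\begin{equation*}
\mathbb{E}\bigl[(\bar W_n^\beta)^2\bigr]\ge c\,e^{\eta\lfloor\delta n\rfloor}\to\infty .
\end{equation*}
Thus $\bar\beta_2\le\sqrt{\log d}$.

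It remains to treat $\beta=\sqrt{\log d}$ and check that the supremum is well defined. At $\beta=\sqrt{\log d}$, the terms with $h$ small relative to $n$ are each close to $1$; for instance $h\le\sqrt n$ gives summands bounded below by a positive constant. So the second moment diverges there. In any case, the set of admissible $\beta$ is an interval by monotonicity of the summands in $\beta$, and its supremum equals $\sqrt{\log d}$ regardless of what happens at the endpoint.
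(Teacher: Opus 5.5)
Your proof is correct and is essentially the paper's argument. It uses the same exact second-moment series, termwise comparison with the homogeneous geometric series for $\beta<\sqrt{\log d}$, and continuity of $f$ at $0$ to produce exponentially large terms at depth $h\approx\delta n$ for $\beta>\sqrt{\log d}$. Two small improvements on the paper: your bound $\beta^2 f(t)-\log d\ge\eta$ near $0$ has the correct orientation, whereas the paper writes $f(x)<1-\delta$, and your $\mathbb{E}[D_1^2]<\infty$ caveat makes explicit what the paper's pair count $d^{n-h}$ glosses over.

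The one slip is in the optional endpoint remark. Summands bounded below for all $h\le\sqrt n$ needs more than continuity of $f$ at $0$; it fails when $1-f(t)\sim t^{1/4}$. Divergence at $\beta=\sqrt{\log d}$ still follows by fixing $h\le K$, letting $n\to\infty$ and then $K\to\infty$, and as you note the supremum does not depend on the endpoint anyway.
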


\begin{proof}
Following the previous computations, we may express the definition of $\bar{\beta}_2$ in terms of the $L^2$-norm of $\bar{W}_n^\beta$ as follows:
\begin{equation}\label{l2inhomo}
\mathbb{E}\left[ \left( \bar{W}_n^\beta \right)^2 \right] = \sum_{h=0}^{n} d^{-h} e^{\beta^2 \sum_{i=1}^{h} f\left( \frac{i}{n} \right)}
\end{equation}
We can easily see from the expression above that the right hand side is monotonic in $\beta$, hence well-defined. In both cases in the assertion, we will prove the upper bound and lower bound of $\bar{\beta}_2$ respectively. 

For the lower bound: From the expression, we can see that each term in the series expression \eqref{l2inhomo} for $\mathbb{E}\left[ \left( \bar{W}_n^\beta \right)^2 \right]$ is bounded above by the corresponding term in the time-homogeneous case (i.e., the terms in the series expansion $\mathbb{E}\left[ \left( W_n^\beta \right)^2 \right]$). As a result, we have when $\beta<\beta_2$, $\mathbb{E}\left[ \left( \bar{W}_n^\beta \right)^2 \right]$ is uniformly bounded in $n$. 

For the upper bound: Fix $0 < \epsilon < 1$ and take $\beta > \sqrt{\log d}$. By the continuity of $f$, there exists $\delta > 0$ such that $f(x) < 1 - \delta$ for all $0 \leq x < \epsilon$. We then have:
\begin{equation*}
\mathbb{E}\left[ \left( \bar{W}_n^\beta \right)^2 \right] \geq \sum_{h=0}^{\lfloor \epsilon n \rfloor} d^{-h} e^{\beta^2 \sum_{i=1}^{h} f\left( \frac{i}{n} \right)}
\end{equation*}
\begin{equation*}
\geq \sum_{h=0}^{\lfloor \epsilon n \rfloor} Cd^{-h} e^{\beta^2 h (1 - \delta)} = \sum_{h=0}^{\lfloor \epsilon n \rfloor} Ce^{h \left( -\log d + \beta^2 (1 - \delta) \right)}
\end{equation*}
where $C$ is a constant depending only on $\beta$. For any fixed $\beta > \sqrt{\log d}$, we can choose $\epsilon>0$ sufficiently samll so that $\delta > 0$ becomes sufficiently small and satisfying $-\log d + \beta^2 (1 - \delta) > 0$, and the series diverges as $n \to \infty$.
\end{proof}

Prior to proceeding with the proofs, we state a key lemma that facilitates the transformation of Gaussian random variables across different measures. This result will play a central role in our proof (we refer readers to \cite{berestycki2025gaussianfreefieldliouville} , Lemma 2.5 for a proof): 

\begin{lemma} \label{Girsanov}(Cameron-Martin-Girsanov formula)
     Let $X = (X_1,...,X_n)$ be a Gaussian vector under the law $\mathbb{P}$, with mean $\mu$ and covariance matrix $V$. Let $\alpha\in\mathbb{R}^n$ be a vector and we define a new probability measure by:\begin{equation*}
         \frac{d\tilde{\mathbb{P}}}{d\mathbb{P}}=\frac{e^{\langle \alpha,X\rangle}}{\mathbb{E}[e^{\langle \alpha,X\rangle}]}
     \end{equation*}
     (here $\langle \alpha,X\rangle$ is the scalar product of two vectors $\alpha$ and $X$) Then under $\tilde{\mathbb{P}}$, $X$ is still a Gaussian vector, with covariance matrix $V$ and mean $\mu+V\alpha$.
\end{lemma}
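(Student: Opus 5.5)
Since the statement is the Gaussian change-of-measure lemma, I will prove it directly by computing the Laplace transform of $X$ under $\tilde{\mathbb{P}}$. A Gaussian law is determined by its moment generating function, so it suffices to show that for every $\lambda\in\mathbb{R}^n$,
\begin{equation*}
\tilde{\mathbb{E}}\bigl[e^{\langle \lambda,X\rangle}\bigr]=\exp\Bigl(\langle \lambda,\mu+V\alpha\rangle+\tfrac12\langle \lambda,V\lambda\rangle\Bigr).
\end{equation*}

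The first step is the normalising constant. Since $\langle\alpha,X\rangle$ is a one-dimensional Gaussian with mean $\langle\alpha,\mu\rangle$ and variance $\langle \alpha,V\alpha\rangle$, we have $\mathbb{E}[e^{\langle\alpha,X\rangle}]=e^{\langle\alpha,\mu\rangle+\frac12\langle\alpha,V\alpha\rangle}$. This is finite and positive, so $\tilde{\mathbb{P}}$ is a well-defined probability measure.

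Next, by definition of $\tilde{\mathbb{P}}$,
\begin{equation*}
\tilde{\mathbb{E}}\bigl[e^{\langle \lambda,X\rangle}\bigr]=\frac{\mathbb{E}\bigl[e^{\langle\lambda+\alpha,X\rangle}\bigr]}{\mathbb{E}\bigl[e^{\langle\alpha,X\rangle}\bigr]}.
\end{equation*}
Applying the same one-dimensional formula to $\lambda+\alpha$ gives the numerator $e^{\langle\lambda+\alpha,\mu\rangle+\frac12\langle\lambda+\alpha,V(\lambda+\alpha)\rangle}$. I then expand the quadratic form, using the symmetry of $V$ to write the cross term as $\langle\lambda,V\alpha\rangle$. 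After cancelling the denominator, what remains is exactly $\langle\lambda,\mu\rangle+\langle\lambda,V\alpha\rangle+\frac12\langle\lambda,V\lambda\rangle$. This is the moment generating function of $\mathcal{N}(\mu+V\alpha,V)$.

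The last step is to identify the law. Because the moment generating function is finite on all of $\mathbb{R}^n$, it determines the distribution; alternatively, setting $\lambda=it$ gives the characteristic function by analytic continuation. Hence $X$ is Gaussian under $\tilde{\mathbb{P}}$ with mean $\mu+V\alpha$ and unchanged covariance $V$.

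The only point that needs care is a degenerate (singular) $V$, where there is no density to manipulate. Working with Laplace transforms avoids densities altogether, so this case is handled by the same argument.
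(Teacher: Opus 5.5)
Your proof is correct: writing $\tilde{\mathbb{E}}\bigl[e^{\langle\lambda,X\rangle}\bigr]$ as a ratio of two Gaussian Laplace transforms and expanding the quadratic form gives the moment generating function of $\mathcal{N}(\mu+V\alpha,V)$, and working with Laplace transforms rather than densities also covers singular $V$. The paper gives no proof of its own and cites Lemma 2.5 of \cite{berestycki2025gaussianfreefieldliouville}, whose argument is essentially this same Laplace-transform computation.
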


  In other (more plain) words, suppose we tilt the law of a Gaussian vector $X=(X_1,...,X_n)$ by some linear functional $X\to \langle\alpha, X\rangle$. Then the process remains Gaussian, with unchanged covariances, however the mean is shifted, and the new mean of the random variable $X_i$ (which original law under $\mathbb{P}$ is $X_i\sim\mathcal{N}(\mu_i,\sigma_i^2)$, is:\begin{equation*}
      \mu_i'=\mu_i+\text{Cov}(X_i,\langle \alpha,X\rangle)
  \end{equation*}

Having the lemma stated, we proceed our proof of the $L^1$ convergence in the sub-critical regime.

\begin{theorem} [Lower Bound for $\bar{\beta}_c$ and Universality Behavior] \label{Main2}
\textsc{(Phase Transition and Universality for Time-inhomogeneous BRW)}
Let $f:[0,1]\to[0,1]$ be a continuous, non-increasing function with $f(0)=1$. Define the normalized partition function for time-inhomogeneous weights by:
\[
\bar{W}_n^\beta=\frac{1}{d^n}\sum_{|\mathbf{x}|=n} e^{\beta \bar{H}_n(\mathbf{x})-\frac{\beta^2}{2}\sum_{i=1}^n f\!\left(\frac{i}{n}\right)},
\]
where the term $\bar{H}_n$ defined by:
\[
\bar{H}_n(\mathbf{x})=\sum_{i=1}^n \sqrt{f\!\left(\frac{i}{n}\right)}w(x_i)
\]
is the Hamiltonian for a branch $\mathbf{x}$ with $|\mathbf{x}|=n$. Then the following universality in $L^1$ limit holds:
\[
\mathbb{E}\bigl[|W_n^\beta - \bar{W}_n^\beta|\bigr] \to 0
\quad\text{as }n\to\infty,\text{ for all }\beta<\beta_c=\sqrt{2\log d}.
\]
As a direct consequence, when $\beta<\beta_c$, $\bar{W}_n^\beta$ converges to an a.s. non-degenerate limit $\bar{W}_\infty^\beta$ in probability, which has the same law as $W_\infty^\beta$.
\end{theorem}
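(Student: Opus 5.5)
The plan is to follow Berestycki's thick-point strategy. Couple the two models on the same tree and the same weights $\{w(\mathbf{x})\}$, and cut off the branches whose early Hamiltonian is atypically large. The time-inhomogeneity only matters late, and by then the early generations have already fixed the answer.

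\textbf{Step 1: the truncation event.} Fix $\beta<\sqrt{2\log d}$, choose $\alpha$ with $\beta<\alpha<\sqrt{2\log d}$, and fix $\epsilon>0$ small. For a branch $\mathbf{x}$ with $|\mathbf{x}|=n$, define the good event
\[
G_n(\mathbf{x})=\{H_k(\mathbf{x})\le \alpha k+K \ \text{for all } k\le \epsilon n\},
\]
where $K$ is a large constant. We impose the analogous condition on $\bar H$ restricted to the first $k$ steps. Split $W_n^\beta=W_n^{\beta,G}+W_n^{\beta,B}$, and split $\bar W_n^\beta$ in the same way.

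\textbf{Step 2: the bad parts are small in $L^1$.} By Lemma \ref{Girsanov}, $\mathbb{E}[W_n^{\beta,B}]$ equals the probability that, under the tilted law, the walk $H_k$ of a single spine (now with drift $\beta$) exceeds $\alpha k+K$ for some $k\le\epsilon n$. Since $\alpha>\beta$, a union bound with Gaussian tails makes this at most $\sum_k e^{-(\alpha-\beta)^2k/2-c K}$, which tends to $0$ as $K\to\infty$ uniformly in $n$. The inhomogeneous version is handled identically. The tilted drift there is $\beta f(i/n)\le\beta$, and the variance is at most $1$, so the same bound holds.

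\textbf{Step 3: the good parts are close in $L^2$.} Expand $\mathbb{E}[(W_n^{\beta,G}-\bar W_n^{\beta,G})^2]$ by the branching depth $h=C(\mathbf{x},\mathbf{y})$, exactly as in the computation preceding Theorem \ref{L^2}.
\begin{itemize}
\item For $h\le\epsilon n$, the two copies coincide up to depth $h$. Their increments after $h$ enter through independent factors of mean one, and the cross terms involve the correlation between $w$ and $\sqrt{f(i/n)}w$. The contribution of each such $h$ is bounded by $d^{-h}e^{\beta^2 h}(1+o(1))\cdot\bigl(\text{discrepancy from } f(i/n)\neq1,\ i\le h\bigr)$. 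By continuity of $f$ at $0$, this discrepancy is $O(\sup_{x\le\epsilon}|1-f(x)|\,\beta^2 h)$.
\item For $h>\epsilon n$, the truncation is what controls the term. A tilt by the two spines shifts the common part by $2\beta$ per step. The constraint $H_h\le\alpha h$ then yields a factor of order $e^{\beta^2h-(2\beta-\alpha)^2h/2}$ when $2\beta>\alpha$. The summand therefore behaves like $\exp(h[-\log d+2\alpha\beta-\alpha^2/2-\beta^2])$. Choosing $\alpha$ close to $\beta$ makes the exponent roughly $-\log d+\beta^2/2<0$, since $\beta<\sqrt{2\log d}$.
\item If $2\beta\le\alpha$, then $\beta<\sqrt{\log d}$ and the plain $L^2$ bound of Theorem \ref{L^2} already suffices.
\end{itemize}
Summing, the first group is $O(\eta(\epsilon))$ with $\eta(\epsilon)\to0$ as $\epsilon\to0$, and the second group is $O(e^{-c\epsilon n})$.

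\textbf{Main obstacle.} The hard step is the term with $h\le\epsilon n$. The difference $\bar H-H$ over the shared part accumulates with variance $\sum_{i\le h}(1-\sqrt{f(i/n)})^2$, and this must be shown small uniformly in $h\le\epsilon n$ after weighting by $e^{h(\beta^2-\log d)}$. For $\beta>\sqrt{\log d}$ that weight grows, so the early-time truncation must also be inserted into this regime. I would first try to truncate at the stricter level $\alpha$ slightly above $\beta$ for all $k\le n$ rather than only $k\le\epsilon n$. I would also use that $f\le1$ is nonincreasing: the late steps then only help, and the early-step discrepancy is small relative to the tilted-and-truncated factor $e^{h(-\log d+\beta^2/2+o(1))}$.

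\textbf{Conclusion.} Combining Steps 2 and 3 gives $\limsup_n\mathbb{E}|W_n^\beta-\bar W_n^\beta|\le o_K(1)+O(\eta(\epsilon)^{1/2})$. Letting $K\to\infty$ and then $\epsilon\to0$ proves the $L^1$ universality. Since $W_n^\beta\to W_\infty^\beta$ almost surely, and in $L^1$ for $\beta<\beta_c$, it follows that $\bar W_n^\beta\to W_\infty^\beta$ in $L^1$, hence in probability. Non-degeneracy of the limit then follows from Theorem \ref{Main1}.
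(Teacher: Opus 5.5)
Your architecture (same tree and weights, cut off thick branches, Girsanov for the bad part, depth expansion for the good parts) is the paper's. However, Step~3 does not go through as written, and the fatal failure is in the bullet you treated as settled. As it stands, the argument only handles $\beta<\sqrt{\log d}$, where no truncation is needed at all. Your good event constrains $H_k$ only for $k\le\epsilon n$, so the bound $H_h\le\alpha h$ used in the second bullet is not available for $h>\epsilon n$. The shared path is penalised only up to depth $\epsilon n$. Consequently the $h>\epsilon n$ part of $\mathbb{E}[(W_n^{\beta,G})^2]$ is of order $e^{n(\beta^2-\log d)-\epsilon n(2\beta-\alpha)^2/2}$, which is exponentially large for small $\epsilon$ once $\beta>\sqrt{\log d}$. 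In the first bullet the constraint at depth $h\le\epsilon n$ \emph{is} available, but you drop it; that is why that sum looks like $\eta\,\epsilon n\,e^{\epsilon n(\beta^2-\log d)}$.

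The remedy you suggest, constraining every depth, is what the paper does ($G_n^\alpha$ constrains all $T\in\{n_0,\dots,n\}$). It needs two ingredients your proposal does not supply.

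First, the inhomogeneous threshold must be $\alpha\sum_{i\le k}f(i/n)$, not the $\alpha k+K$ your Step~2 implicitly uses. The scaled threshold gives the uniform bound \eqref{l2bound}, which is $e^{h(\beta^2/2-\log d+O(\alpha-\beta))}$. The unscaled one fails. Take $f=1/\sqrt{2}$ outside a small neighbourhood of $0$. Keeping the diagonal ($h=n$) path below slope $\alpha$ against drift $2\beta f$ costs only about $n(2\beta f-\alpha)^2/(2f)$. So that term of $\mathbb{E}[(\bar W_n^{\beta,G})^2]$ grows, up to a small error in the exponent, at least like $e^{n((2-\sqrt{2})\beta^2-\log d)}$. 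This diverges for $\log d/(2-\sqrt{2})<\beta^2<2\log d$.

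Second, once late constraints are present, your claim that post-branching increments ``enter through independent factors of mean one'' is false. The early-depth comparison then needs an argument like Lemma~\ref{GNConvergence}. For fixed $h$, under the tilt each post-branching walk drifts at rate $\beta$ (resp.\ $\beta f$), strictly below the threshold slope $\alpha$ (resp.\ $\alpha f$). So the late constraints hold with probability $1-o(1)$ uniformly in $n$. Hence the tilted good-event probabilities in the expansions of $\mathbb{E}[(W_n^{\beta,G})^2]$, $\mathbb{E}[(\bar W_n^{\beta,G})^2]$ and $\mathbb{E}[W_n^{\beta,G}\bar W_n^{\beta,G}]$ all converge to the same $g^\alpha(h)$. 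A fixed cutoff $n_1$, the geometric tail above, and dominated convergence then give $L^2$-closeness of the good parts. With that, the $\epsilon n$ split and the quantitative $\eta(\epsilon)$ bookkeeping become unnecessary.
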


\begin{proof}
        We prove the $L^1$-convergence by dividing the branches into ``good environment'' and its complement, for both time-homogeneous and time-inhomogeneous environments. Using the Cameron-Martin-Girsanov formula mentioned previously, we can see that the branches in ``good environment'' in both cases dominates the total mass (i.e. the $L^1$-norm), and converges in $L^2$ to the same random variable.
        
        Let $\beta < \alpha < 2\beta$ be a fixed constant. For any branch $|\mathbf{x}| = n$, we define the ``good environment'' event for the time-homogeneous BRWs, denoted $G_n^\alpha(\mathbf{x})$ (roughly speaking, it corresponds to the environment where the Hamiltonian of a branch $\mathbf{x}$ remains ``controlled'') for all $T \in \{n_0, \ldots, n\}$ ,with $n_0 \geq 0$ a fixed integer, by:
\begin{equation*}
G_n^\alpha(\mathbf{x}) := \left\{ \forall T \in \{n_0, \ldots, n\}, \, H_T(\mathbf{x}) < \alpha T \right\}.
\end{equation*}

We define $J_n$ as the partition function restricted to good environments (since there is no ambiguity, we drop $\beta$ in the notation):
\begin{equation*}
J_n = \frac{1}{d^n} \sum_{|\mathbf{x}|=n} e^{\beta H_n(\mathbf{x}) - \frac{1}{2}\beta^2 n} \mathbf{1}_{\{\mathbf{x} \in G_n^\alpha(\mathbf{x})\}},
\end{equation*}
and $K_n$ as its complement relative to $W_n^\beta$:
\begin{equation*}
K_n = \frac{1}{d^n} \sum_{|\mathbf{x}|=n} e^{\beta H_n(\mathbf{x}) - \frac{1}{2}\beta^2 n} \mathbf{1}_{\{\mathbf{x} \in (G_n^\alpha(\mathbf{x}))^c\}}.
\end{equation*}
This gives the $L^1$-norm decomposition:
\begin{equation*}
\mathbb{E}\left[ W_n^\beta \right] = \mathbb{E}\left[ J_n \right] + \mathbb{E}\left[ K_n \right].
\end{equation*}

Analogously, we denote by $\bar{G}_n$ ,$\bar{J}_n$ and $\bar{K}_n$ the above quantities for time-inhomogeneous BRWs, which are given by:
\begin{equation*}
            \bar{G}_n^\alpha(\mathbf{x})=\{\forall T\in \{n_0,...,n\}, \bar{H}_T(\mathbf{x})<\alpha \sum_{i=1}^{T} f(\frac{i}{n})\}
        \end{equation*}
        and
        \begin{equation*}
            \bar{J}_n = \frac{1}{d^n} \sum_{|\mathbf{x}|=n} e^{\beta \bar{H}_n(\mathbf{x}) - \frac{\beta^2}{2} \sum_{i=1}^{n} f\left(\frac{i}{n}\right) w(x_i)}\mathbf{1}_{\{\mathbf{x}\in \bar{G}_n^\alpha(\mathbf{x})\}}
        \end{equation*}
        
        By the triangle inequality, we have:
        \begin{equation*}
            \mathbb{E}\left[|\bar{W}_n^\beta - W_n^\beta|\right] = \mathbb{E}\left[|J_n - \bar{J}_n + K_n - \bar{K}_n|\right] \leq \mathbb{E}\left[|J_n - \bar{J}_n|\right] + \mathbb{E}\left[K_n\right] + \mathbb{E}\left[\bar{K}_n\right].
        \end{equation*}
        It follows that when $n_0 \to \infty$, both $\mathbb{E}[K_n]$ and $\mathbb{E}[\bar{K}_n]$ tend to $0$ uniformly in $n$. To verify this, define the tilted probability measure $\hat{\mathbb{P}}$ by:
        \begin{equation*}
            \frac{\text{d}\hat{\mathbb{P}}}{\text{d}\mathbb{P}} = \frac{e^{\beta \bar{H}_n(\mathbf{x})}}{\mathbb{E}\left[e^{\beta \bar{H}_n(\mathbf{x})}\right]}.
        \end{equation*}
        By Cameron-Martin-Girsanov formula (Lemma \ref{Girsanov}), for any $T\in\{n_0,...,n\}$, under $\hat{\mathbb{P}}$, $\bar{H}_T(\mathbf{x})$ is a Gaussian random variable with mean $\beta \sum_{i=1}^{T} f\left(\frac{i}{n}\right)$ and preserves the covariance structure of $\bar{H}_T(\mathbf{x})$ under $\mathbb{P}$. Thus for any $|\mathbf{x}|=n$, we have the probability of the event $\hat{\mathbb{P}}\{\bar{G}_n^\alpha(\mathbf{x})\}$ equals to:\begin{equation*}
            \hat{\mathbb{P}}\{\bar{G}_n^\alpha(\mathbf{x})\}=\hat{\mathbb{P}}\{\forall T\in \{n_0,...,n\}, \bar{H}_T(\mathbf{x})<\alpha \sum_{i=1}^{T} f\left(\frac{i}{n}\right)\}
        \end{equation*}
        \begin{equation*}
           =\mathbb{P}\{\forall T\in \{n_0,...,n\}, \bar{H}_T(\mathbf{x})<(\alpha -\beta) \sum_{i=1}^{T} f\left(\frac{i}{n}\right)\} \geq \mathbb{P}\{\bar{G}_n^{\alpha-\beta}(\mathbf{x})\}
        \end{equation*}
        Consequently,
        \begin{equation*}
            \mathbb{E}[\bar{J}_n] = \hat{\mathbb{P}}\{\bar{G}_n^\alpha(\mathbf{x})\} \geq \mathbb{P}\{\bar{G}_n^{\alpha-\beta}(\mathbf{x})\} = 1 - \epsilon(n_0),
        \end{equation*}
        where $\epsilon(n_0) \to 0$ as $n_0 \to \infty$ uniformly in $n$. The probability that any single branch $\mathbf{x}$ with $|\mathbf{x}| = n$ does not lie in the good environment can be made arbitrarily small by choosing $n_0$ sufficiently large, with the upper bound uniform in $n$:
\begin{equation*}
\mathbb{P}\left\{ G_n^\alpha(\mathbf{x}) \right\} \geq 1 - \epsilon(n_0),
\end{equation*}
where $\epsilon$ depends only on $\alpha$ and $n_0$, and satisfies $\epsilon(n_0) \to 0$ as $n_0 \to \infty$ for each fixed $\alpha>\beta$. To verify this, we bound the probability of the complement event using Gaussian tail estimates:
\begin{equation}\label{good}
\mathbb{P}\left\{ \left(G_n^\alpha(\mathbf{x})\right)^c \right\} \leq \mathbb{P}\left\{ \exists T \geq n_0, \, H_T(\mathbf{x}) \geq \alpha \sum_{i=1}^{T} f\left(\frac{i}{n}\right) \right\} \leq \sum_{T \geq n_0} e^{-\alpha^2 \sum_{i=1}^{T} f\left(\frac{i}{n}\right)},
\end{equation}
which tends to $0$ as $n_0 \to \infty$ uniformly in $n$. A similar argument yields $\mathbb{E}[\bar{J}_n]=1-\bar{\epsilon}(n_0)$ where $\bar{\epsilon}(n_0) \to 0$ as $n_0 \to \infty$ uniformly in $n$. While the right-hand side of \eqref{good} converges for any $\alpha > 0$, we later use the good environment $G_n^{\alpha-\beta}(\mathbf{x})$ with $\alpha < 2\beta$, justifying our restriction to $\beta < \alpha < 2\beta$.

Next, we will show that when $\beta<\sqrt{2}\bar{\beta}_2$, $J_n$ and $\bar{J}_n$ are both uniformly bounded in $L^2$. Since we have seen in Theorem \ref{l2inhomo} that $\bar{\beta}_2 = \beta_2$, we have that $\bar{J}_n$ is uniformly bounded in $L^2$ when $\beta<\beta_c$. For pairs of branches $(\mathbf{x}, \mathbf{y})$ with $C(\mathbf{x}, \mathbf{y}) = h$, define the transformed measure $\tilde{\mathbb{P}}^h$ by:
         \begin{equation*}
         \frac{d\tilde{\mathbb{P}}}{d\mathbb{P}} = \frac{e^{\beta \left( \bar{H}_n(\mathbf{x}) + \tilde{\bar{H}}_n(\mathbf{x}) \right)}}{\mathbb{E}\left[ e^{\beta \left( \bar{H}_n(\mathbf{x}) + \tilde{\bar{H}}_n(\mathbf{x}) \right)} \right]},
         \end{equation*}
         and under $\tilde{\mathbb{P}}^h$, $\bar{H}_h(\mathbf{x})\sim \mathcal{N}(2\beta\sum_{i=1}^{h} f(\frac{i}{n}), \beta\sum_{i=1}^{h} f(\frac{i}{n}))$, so we have: \begin{equation}\label{J_n_bar}
\mathbb{E}\left[ J_n^2 \right] = \sum_{h=0}^n d^{-h} e^{\beta^2 \sum_{i=1}^{h} f(\frac{i}{n})} \tilde{\mathbb{P}}^h \left\{ \bar{G}_n^\alpha(\mathbf{x}) \cap \tilde{\bar{G}}_n^\alpha(\mathbf{x}) \right\}.
\end{equation}
\begin{equation}\label{l2bound}
    \leq \sum_{h=0}^n d^{-h} e^{\beta^2 \sum_{i=1}^{h} f(\frac{i}{n})} \tilde{\mathbb{P}}^h\{\bar{H}_h(\mathbf{x})<\alpha\sum_{i=1}^h f(\frac{i}{n})\}\leq \sum_{h=0}^n d^{-h} e^{(\beta^2-\frac{1}{2}(\alpha-2\beta)^2) \sum_{i=1}^{h} f(\frac{i}{n})}
\end{equation}
via Gaussian estimates. Finally, by letting $\alpha\to\beta$, we can see that the series is uniformly bounded in $n$ when $\beta<\sqrt{2}\bar{\beta_2}$, by a direct comparison with the $L^2$ computation of $\bar{W}_n^\beta$ in equation \ref{l2inhomo}. A similar comparison with the $L^2$ computation of $W_n^\beta$ yields that when $\beta<\sqrt{2}\beta_2=\beta_c$, $J_n$ is uniformly bounded in $L^2$. 

        It remains to show $J_n - \bar{J}_n \to 0$ in $L^1$, which we establish by proving that the convergence holds in $L^2$. We start by expressing the $L^2$-norm of $J_n - \bar{J}_n$ as
        \begin{equation}\label{difference2}
            \mathbb{E}\left[(\bar{J}_n^\beta - J_n^\beta)^2\right] = \mathbb{E}\left[(\bar{J}_n^\beta)^2 - \bar{J}_n^\beta J_n^\beta\right] + \mathbb{E}\left[(J_n^\beta)^2 - \bar{J}_n^\beta J_n^\beta\right].
        \end{equation}
        We will show that for any fixed $n_0 > 0$, both terms on the right-hand side of \eqref{difference2} tend to $0$ as $n \to \infty$.
        
        Fix $n_1 > 0$, and we decompose $\mathbb{E}\left[(\bar{J}_n^\beta)^2 - \bar{J}_n^\beta J_n^\beta\right]$ by $C(\mathbf{x},\mathbf{y})=h\in \{0,...,n\}$ into two sums over $h \leq n_1$ and $h > n_1$, so that $ \mathbb{E}\left[(\bar{J}_n^\beta)^2 - \bar{J}_n^\beta J_n^\beta\right]$ equals:
        \begin{equation*}
            \sum_{h=0}^{n_1} d^{-h} [\left( e^{\beta^2 \sum_{i=1}^{h} f\left(\frac{i}{n}\right)}\mathbb{P}{\{\bar{G}_n^\alpha(\mathbf{x}) \cap \bar{\tilde{G}}_n^\alpha(\mathbf{x})\}}\right) 
        \end{equation*}
        \begin{equation*}
            -\left(e^{\frac{1}{2}\beta^2 \left( \sum_{i=1}^{h} \left(1 + \sqrt{f\left(\frac{i}{n}\right)}\right)^2 - (h) - \sum_{i=1}^{h} f\left(\frac{i}{n}\right)\right)} \mathbb{P}{\{\bar{G}_n^\alpha(\mathbf{x}) \cap \bar{\tilde{G}}_n^\alpha(\mathbf{x})\}}\right) ]
        \end{equation*}
        \begin{equation*}
            +\sum_{h=n_1+1}^{n} [(d^{-h} \left( e^{\beta^2 \sum_{i=1}^{h} f\left(\frac{i}{n}\right)}\mathbb{P}{\{\bar{G}_n^\alpha(\mathbf{x}) \cap \bar{\tilde{G}}_n^\alpha(\mathbf{x})\}}\right) 
        \end{equation*}
        \begin{equation*}
            -\left(e^{\frac{1}{2}\beta^2 \left( \sum_{i=1}^{h} \left(1 + \sqrt{f\left(\frac{i}{n}\right)}\right)^2 - (h) - \sum_{i=1}^{h} f\left(\frac{i}{n}\right)\right)} \mathbb{P}{\{\bar{G}_n^\alpha(\mathbf{x}) \cap \bar{\tilde{G}}_n^\alpha(\mathbf{x})\}}\right)]
        \end{equation*}
             
        For $\beta < \beta_c$ and any fixed $n_1 > 0$, form Lemma \ref{JNConvergence} we can see that $\bar{J}_nJ_n$ and $\bar{J}_n^2$ converges to the same random variable $L^1$, since $\mathbb{E}[(\bar{J}_n)^2]$ and $\mathbb{E}[\bar{J}_n J_n]$ has the following expansions:
        \begin{equation*}
            \mathbb{E}\left[(\bar{J}_n)^2\right] = \sum_{h=0}^{n_1} d^{-h} e^{\beta^2 \sum_{i=1}^{h} f(\frac{i}{n})} g^\alpha(h) + F(n_1)
        \end{equation*}
        and 
        \begin{equation*}
            \quad \mathbb{E}\left[\bar{J}_n J_n\right] = \sum_{h=0}^{n_1} d^{-h} e^{\beta^2 \left( \sum_{i=1}^{h} \left(1 + \sqrt{f\left(\frac{i}{n}\right)}\right)^2 - (h) - \sum_{i=1}^{h} f\left(\frac{i}{n}\right)\right)} g^\alpha(h) + F'(n_1)
        \end{equation*}
        where $F(n_1)$ and $F'(n_1)$ can be arbitrarily small by choosing $n_1$ sufficiently large. Since $n_1 > 0$ is arbitrary, both $\mathbb{E}[(\bar{J}_n)^2]$ and $\mathbb{E}[\bar{J}_n J_n]$ converge to $\sum_{h=0}^\infty d^{-h} e^{\beta^2 h} g^\alpha(h)$ as $n \to \infty$.
        
        To confirm this, note that for any fixed $n_1 > 0$ and $h \in \{0, 1, \dots, n_1\}$, $\frac{h}{n} \to 0$ as $n \to \infty$; by continuity of $f$, this implies $f\left(\frac{h}{n}\right) \to f(0) = 1$ and $\sqrt{f\left(\frac{h}{n}\right)} \to 1$ as $n \to \infty$, and we have the convergence of $\mathbb{E}\left[(\bar{J}_n)^2\right]$ and $\mathbb{E}\left[\bar{J}_n J_n\right]$ by dominated convergence theorem. Thus, $\mathbb{E}\left[(\bar{J}_n^\beta)^2 - \bar{J}_n^\beta J_n^\beta\right] \to 0$ as $n \to \infty$. By an identical argument, $\mathbb{E}\left[(J_n^\beta)^2 - \bar{J}_n^\beta J_n^\beta\right] \to 0$ as $n \to \infty$. This completes the proof of the convergence $\bar{J}_n - J_n$ to $0$ in $L^2$, and hence the convergence $\bar{W}_n^\beta-W_n^\beta$ to $0$ in $L^1$.
\end{proof}

As mentioned in the proof, we establish the $L^2$-convergence of $J_n$ as introduced in the proof of Theorem \ref{Main2}. Together with the observation that $n_0 \geq 0$ may be chosen arbitrarily large (thereby making $K_n$ arbitrarily small in $L^1$), this implies that $(W_n^\beta)_{n\geq 0}$ converges in $L^1$ to $W_\infty^\beta$ within the subcritical regime $\beta < \sqrt{2\log d}$. By identical lines of reasoning, the same convergence result holds for time-inhomogeneous environments, which is $\bar{J}_n$ appearing in the proof of Theorem \ref{Main2}.
 
\begin{lemma} ($L^2$ Convergence of Partition Function in Good Environment) \label{JNConvergence}
    Under the notation of Theorem \ref{Main2}'s proof, $J_n$ converges in $L^2$ for each fixed $n_0 \geq 0$.
\end{lemma}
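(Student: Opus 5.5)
The plan is to show that $(J_n)_{n\ge n_0}$ is Cauchy in $L^2$, i.e. $\mathbb{E}[(J_n-J_m)^2]\to 0$ as $n,m\to\infty$. Expanding,
\begin{equation*}
\mathbb{E}[(J_n-J_m)^2]=\mathbb{E}[J_n^2]+\mathbb{E}[J_m^2]-2\,\mathbb{E}[J_nJ_m],
\end{equation*}
so it suffices to prove that $\mathbb{E}[J_n^2]$ and $\mathbb{E}[J_nJ_m]$ (with $m\ge n$) converge to a common limit $\ell$.

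For $\mathbb{E}[J_n^2]$ we use the same pair decomposition over the splitting depth $h=C(\mathbf{x},\mathbf{y})$ as in \eqref{J_n_bar}, in the homogeneous case $f\equiv 1$:
\begin{equation*}
\mathbb{E}[J_n^2]=\sum_{h=0}^n d^{-h}e^{\beta^2 h}\,\tilde{\mathbb{P}}^h\{G_n^\alpha(\mathbf{x})\cap G_n^\alpha(\mathbf{y})\}.
\end{equation*}
Under $\tilde{\mathbb{P}}^h$, the common part $H_h$ has drift $2\beta$ per step, and after depth $h$ each branch has drift $\beta$. Each summand is therefore bounded by $d^{-h}e^{(\beta^2-\frac12(\alpha-2\beta)^2)h}$, which is summable for $\beta<\sqrt{2\log d}$ and $\alpha$ close to $\beta$, by \eqref{l2bound}. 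This gives a dominating sequence independent of $n$.

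For each fixed $h$, the probability $g_n(h):=\tilde{\mathbb{P}}^h\{G_n^\alpha(\mathbf{x})\cap G_n^\alpha(\mathbf{y})\}$ is monotone non-increasing in $n$, since the constraint set $\{n_0,\dots,n\}$ only grows. Hence it converges to $g(h)$, the probability that both tilted random walks stay below $\alpha T$ for all $T\ge n_0$. This limit is positive because $\alpha>\beta$ and the post-split walks have drift $\beta<\alpha$. Dominated convergence then gives $\mathbb{E}[J_n^2]\to\ell:=\sum_h d^{-h}e^{\beta^2h}g(h)$.

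For the cross term $\mathbb{E}[J_nJ_m]$ with $m\ge n$, write $J_m$ as a sum over descendants: each $|\mathbf{y}'|=m$ has a unique ancestor $\mathbf{y}$ at level $n$. Pairs $(\mathbf{x},\mathbf{y}')$ with $|\mathbf{x}|=n$ split at some depth $h\le n$, and under the tilt by $e^{\beta H_n(\mathbf{x})+\beta H_m(\mathbf{y}')}$ we again obtain weight $d^{-h}e^{\beta^2 h}$, after averaging the GW tree and the independent increments beyond $n$, which have mean one. The weight multiplies the probability that the $\mathbf{x}$-walk satisfies the constraint up to $n$ and the $\mathbf{y}'$-walk satisfies it up to $m$. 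Domination is as before, and for fixed $h$ this probability is squeezed between $g_m(h)$ and $g_n(h)$, so it converges to $g(h)$. Hence $\mathbb{E}[J_nJ_m]\to\ell$ as $n,m\to\infty$, and the Cauchy property follows.

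The main obstacle is the rigorous uniform domination in the cross term. The $\mathbf{y}'$-branch has length $m$, and its tilt contributes the factor $e^{\beta^2 h}$ only from the shared part. The truncation must be used only at times $T\le h$ to bound the shared segment (as in \eqref{l2bound}), and the bound must not depend on $m$. I would bound the cross-term probability by $\tilde{\mathbb{P}}^h\{H_h(\mathbf{x})<\alpha h\}$, which removes all dependence on $n$ and $m$. Small $h<n_0$, where the event imposes no constraint, contribute only finitely many bounded terms.
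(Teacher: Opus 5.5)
Your proposal is correct and follows essentially the paper's route: the Cauchy criterion in $L^2$, the decomposition of $\mathbb{E}[J_n^2]$ and $\mathbb{E}[J_nJ_m]$ by the splitting depth $h$, domination by the series in \eqref{l2bound}, and termwise convergence of the tilted probabilities to $g^\alpha(h)$. The differences are minor. You get the termwise convergence directly from the consistency of the tilted laws in $n$ (monotone decrease of $g_n(h)$, plus the squeeze $g_m(h)\le\cdot\le g_n(h)$ for the cross term) rather than through Lemma \ref{GNConvergence}, and you spell out the $m$-independent domination of the cross term that the paper only sketches.
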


\begin{proof}
    We prove convergence by showing for each fixed $n_0\geq 0$, $J_n$ is a Cauchy sequence in $L^2$. For this, note that
    \begin{equation*}
        \mathbb{E}\left[(J_n - J_m)^2\right] = \mathbb{E}\left[J_n^2\right] + \mathbb{E}\left[J_m^2\right] - 2\mathbb{E}\left[J_n J_m\right].
    \end{equation*}
    Since $J_n$ is uniformly bounded in $L^2$, it suffices to establish two inequalities: an upper bound for $\mathbb{E}\left[J_n^2\right]$ and a lower bound for $\mathbb{E}\left[J_n J_m\right]$:
    \begin{equation}\label{ineq1}
        \limsup_{n \to \infty} \mathbb{E}\left[J_n^2\right] \leq \sum_{h=0}^{\infty} d^{-h} e^{\beta^2 h} g^\alpha(h)
    \end{equation}
    and
    \begin{equation}\label{ineq2}
        \liminf_{n,m \to \infty} \mathbb{E}\left[J_n J_m\right] \geq \sum_{h=0}^{\infty} d^{-h} e^{\beta^2 h} g^\alpha(h),
    \end{equation}
    where for each $h \geq 0$ (corresponding to $C(\mathbf{x},\mathbf{y}) = h$), $g^\alpha(h)$ is defined by:
    \begin{equation*}
        g^\alpha(h) = \tilde{\mathbb{P}}^h\left\{ G^\alpha(\mathbf{x}) \cap \tilde{G}^\alpha(\mathbf{x}) \right\} := \lim_{n \to \infty} \tilde{\mathbb{P}}^h\left\{ G_n^\alpha(\mathbf{x}) \cap \tilde{G}_n^\alpha(\mathbf{x}) \right\}.
    \end{equation*}
    Lemma \ref{GNConvergence} guarantees this convergence is uniform for $C(\mathbf{x},\mathbf{y}) = h < n_1$ and any fixed $n_1 \geq 0$.

    Our approach involves decomposing the series for $\mathbb{E}\left[J_n^2\right]$ and $\mathbb{E}\left[J_n J_m\right]$ into two parts: sums over pairs $(\mathbf{x},\mathbf{y})$ with $C(\mathbf{x},\mathbf{y}) = h < n_1$, and their complements. As shown previously, the former dominates the contribution to $\mathbb{E}\left[J_n^2\right]$ for large $n_1$ (with the complement being negligible). Additionally, the probability of being in the good environment converges to $g^\alpha(h)$ uniformly for $h < n_1$.

    For \eqref{ineq1}, we use the transformed measure $\tilde{\mathbb{P}}^h$ from the $L^2$-boundedness proof of Theorem \ref{Main2}. Fix $n_1 \in \{n_0, \ldots, n\}$ (eventually tending to infinity) and decompose the series by the number of differing steps between $\mathbf{x}$ and $\mathbf{y}$:
    \begin{equation*}
        \mathbb{E}\left[J_n^2\right] = \sum_{h=1}^{n_1} d^{-h} e^{\beta^2 h} \tilde{\mathbb{P}}^h\left\{ G_n^\alpha(\mathbf{x}) \cap \tilde{G}_n^\alpha(\mathbf{x}) \right\} + \sum_{h=n_1+1}^{n} d^{-h} e^{\beta^2 h} \tilde{\mathbb{P}}^h\left\{ G_n^\alpha(\mathbf{x}) \cap \tilde{G}_n^\alpha(\mathbf{x}) \right\}
    \end{equation*}
    \begin{equation*}
        \leq \sum_{h=1}^{n_1} d^{-h} e^{\beta^2 h} \tilde{\mathbb{P}}^h\left\{ G_n^\alpha(\mathbf{x}) \cap \tilde{G}_n^\alpha(\mathbf{x}) \right\} + F^\alpha(n_1),
    \end{equation*}
    where $F^\alpha(n_1)$ can be made arbitrarily small (uniformly in $n$) by choosing $n_1$ sufficiently large. As in Theorem \ref{Main2}'s proof, $F^\alpha(n_1)$ is bounded by the tail of the convergent series \eqref{l2bound}. Letting $n \to \infty$ and applying the dominated convergence theorem (with domination by the $L^2$-bound from \eqref{l2bound}) yields \eqref{ineq1}.

    For \eqref{ineq2}, consider $\mathbf{x}$ with $|\mathbf{x}| = m$, $\mathbf{y}$ with $|\mathbf{y}| = n$, and $C(\mathbf{x},\mathbf{y}) = h \leq n_1\leq \min\{m,n\}$. Define the transformed measure for differing-length walks (where $\tilde{H}_m(\mathbf{x})$ denotes the Hamiltonian of length-$m$ branch $\mathbf{y}$):
    \begin{equation*}
        \frac{d\tilde{\mathbb{P}}^h}{d\mathbb{P}} = \frac{e^{\beta \left( H_n(\mathbf{x}) + \tilde{H}_m(\mathbf{x}) \right)}}{\mathbb{E}\left[ e^{\beta \left( H_n(\mathbf{x}) + \tilde{H}_m(\mathbf{x}) \right)} \right]}.
    \end{equation*}
    The argument mirrors that of \eqref{ineq1}: uniform convergence from Lemma \ref{GNConvergence} provides a lower bound when restricting to $C(\mathbf{x},\mathbf{y}) < n_1$, with the complement becoming negligible as $n_1$ becomes large.

    Combining \eqref{ineq1} and \eqref{ineq2} shows $J_n$ is a Cauchy sequence in $L^2$, hence convergent in $L^2$ for each fixed $n_0 \geq 0$.
\end{proof}

The following lemma, which was used in the proof of the previous Lemma, establishes the uniform convergence of the events under the tilted probability: 

\begin{lemma} (Convergence of Joint Probability in Good Environment) \label{GNConvergence}
    Let $n_1 > 0$ be a fixed integer. For all $h \in \{0, \ldots, n_1\}$, the convergence
    \begin{equation*}
        \lim_{m,n\to\infty}\tilde{\mathbb{P}}^h\left\{ G_n^\alpha(\mathbf{x}) \cap G_m^\alpha(\mathbf{y}) \right\}= g^\alpha(h) := \mathbb{P}\left\{\tilde{G}^{\alpha,h}(\mathbf{x}) \cap \tilde{G}^{\alpha,h}(\mathbf{y}) \right\}
    \end{equation*}
    uniformly for all $\mathbf{x}$ with $|\mathbf{x}| = n$, $\mathbf{y}$ with $|\mathbf{y}| = m$, and $C(\mathbf{x},\mathbf{y}) = h$. Here, $\tilde{G}^{\alpha,h}$ is defined by
    \begin{equation*}
        \tilde{G}^{\alpha,h}(\mathbf{x}) = \left\{ \forall T \geq n_0, \, \tilde{H}^h_T(\mathbf{x}) < \alpha T \right\},
    \end{equation*}
    where $\tilde{H}^h_T$ denotes the tilted Hamiltonian (i.e., the original Hamiltonian $H_T$ under the transformed measure $\tilde{\mathbb{P}}^h$).
    Moreover, the same convergence (to the same limit) holds for time-inhomogeneous weights:\begin{equation*}
        \lim_{m,n\to\infty}\tilde{\mathbb{P}}^h\left\{ \bar{G}_n^\alpha(\mathbf{x}) \cap \bar{G}_m^\alpha(\mathbf{y}) \right\}= g^\alpha(h) := \mathbb{P}\left\{\tilde{G}^{\alpha,h}(\mathbf{x}) \cap \tilde{G}^{\alpha,h}(\mathbf{y}) \right\}
    \end{equation*} 
\end{lemma}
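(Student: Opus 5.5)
Under $\tilde{\mathbb{P}}^h$, for a pair $(\mathbf{x},\mathbf{y})$ with $C(\mathbf{x},\mathbf{y})=h$, the Cameron--Martin--Girsanov formula (Lemma \ref{Girsanov}) gives the joint law of the two Hamiltonian paths $(H_T(\mathbf{x}))_{T\le n}$ and $(H_T(\mathbf{y}))_{T\le m}$ explicitly. Both paths are Gaussian random walks that share their first $h$ increments and then evolve independently. The shift in mean is determined by the covariance with $\beta(H_n(\mathbf{x})+H_m(\mathbf{y}))$: each shared increment $i\le h$ receives drift $2\beta$, and each non-shared increment receives drift $\beta$. The key observation is that this law, restricted to any time window $\{0,\dots,N\}$ with $N\le\min(m,n)$, does not depend on $m,n$. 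So I would realize both paths on one probability space as infinite processes $\tilde{H}^h_T(\mathbf{x}),\tilde{H}^h_T(\mathbf{y})$, $T\ge0$, whose truncations have the correct laws. Then
\[
\tilde{\mathbb{P}}^h\{G_n^\alpha(\mathbf{x})\cap G_m^\alpha(\mathbf{y})\}=\mathbb{P}\{\forall T\in\{n_0,\dots,n\}:\tilde H^h_T(\mathbf{x})<\alpha T,\ \forall T\in\{n_0,\dots,m\}:\tilde H^h_T(\mathbf{y})<\alpha T\}.
\]
This is a decreasing family of events in $(m,n)$, converging to $\tilde G^{\alpha,h}(\mathbf{x})\cap\tilde G^{\alpha,h}(\mathbf{y})$.

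For a quantitative rate, the difference from the limit is bounded by $\mathbb{P}\{\exists T>\min(m,n):\tilde H^h_T\ge\alpha T\}$ for either path. For $T>h$, $\tilde H^h_T$ is Gaussian with mean $\beta T+\beta h\le \beta T+\beta n_1$ and variance $T$. Since $\alpha>\beta$, a Gaussian tail bound of the form $\exp(-(\alpha-\beta)^2T/2+O(n_1))$, summed over $T>\min(m,n)$, tends to $0$. This bound depends only on $h\le n_1$, which gives the uniformity over $h\le n_1$ and over the choice of pair. The limit is indeed $g^\alpha(h)$ by definition.

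For the time-inhomogeneous statement, I would again use Lemma \ref{Girsanov}. Now the increments are $\sqrt{f(i/n)}w$, the thresholds are $\alpha\sum_{i\le T}f(i/n)$, and the drifts are $2\beta f(i/n)$ and $\beta f(i/n)$. I would split the event at a fixed large $N$. On the window $T\le N$, every $f(i/n)$ with $i\le N$ tends to $f(0)=1$ by continuity at $0$. The joint Gaussian vector of the first $N$ partial sums of both paths therefore converges in law (means and covariances converge) to that of the homogeneous tilted processes. The finite-window event is defined by strict inequalities, and its boundary has probability zero under the limiting nondegenerate Gaussian law. Hence its probability converges to the homogeneous window probability.

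The tail $T>N$ is the main obstacle, because $f$ may become small, so the variance and the drift gap both shrink. I would parametrize by $s_T=\sum_{i\le T}f(i/n)$. The path is then a Gaussian walk with variance $s_T$ and mean $\beta s_T+\beta s_h$ (for $T\ge h$), and the event to control is the crossing of the line $\alpha s_T$. By time-changing to a Brownian motion with drift $\beta$ evaluated at times $s_T$, the exceedance probability after time $N$ is at most $\mathbb{P}\{\exists t\ge s_N: B_t+\beta t+\beta s_h\ge\alpha t\}\le C e^{-c(\alpha-\beta)^2 s_N}$. For large $n$ we have $s_N\ge N/2$, so this is small uniformly in $n,m$, and the homogeneous tail is handled identically. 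Letting $n,m\to\infty$ first and then $N\to\infty$ yields convergence to the same $g^\alpha(h)$, uniformly for $h\le n_1$.
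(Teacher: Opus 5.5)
Your proposal is correct and follows essentially the paper's route. On a finite window $T\le N$ the tilted law is that of the infinite tilted walk; in the inhomogeneous case it converges to it as $f(i/n)\to 1$, and this gives the upper bound by monotonicity. The remainder $T>N$ is controlled by a Gaussian tail estimate uniform in $n,m$ and $h\le n_1$, which gives the matching lower bound. Your explicit coupling and your Brownian time-change $T\mapsto s_T=\sum_{i\le T}f(i/n)$ for the inhomogeneous tail make rigorous what the paper only asserts by ``the same argument'': that the tail stays small even where $f$ is small, and with it both the increment variance and the per-step gap $(\alpha-\beta)f(i/n)$ between threshold and drift.
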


\begin{proof}
    For each $\mathbf{x}$ with $|\mathbf{x}| = n$ and $T \in \{0, \ldots, n\}$, let $E^\alpha(T)$ to be the event $E^\alpha_T(\mathbf{x}) = \left\{ H_T(\mathbf{x}) < \alpha T \right\}$ and define $\tilde{E}^{\alpha,h}_T(\mathbf{x})$ analogously for $\tilde{H}^h_T(\mathbf{x})$. By definition, $\tilde{G}^{\alpha,h} = \bigcap_{T=n_0}^\infty \tilde{E}^{\alpha,h}_T(\mathbf{x})$.

    We first establish the upper bound:
    \begin{equation}\label{upper}
        \limsup_{m,n \to \infty} \tilde{\mathbb{P}}^h\left\{ G_n^\alpha(\mathbf{x}) \cap G_m^\alpha(\mathbf{y}) \right\} \leq g^\alpha(h).
    \end{equation}
    To see this, observe that for any fixed $n_2 > n_0$, when $m, n > n_2$, we have
    \begin{equation*}
        \tilde{\mathbb{P}}^h\left\{ G_n^\alpha(\mathbf{x}) \cap G_m^\alpha(\mathbf{y}) \right\} \leq \mathbb{P}\left\{ \bigcap_{T=n_0}^{n_2} \tilde{E}^h_T(\mathbf{x}) \cap \tilde{E}^h_T(\mathbf{y}) \right\}.
    \end{equation*}
    Since $n_2 > n_0$ is arbitrary, taking $n_2 \to \infty$ gives
    \begin{equation*}
        \limsup_{m,n \to \infty} \tilde{\mathbb{P}}^h\left\{ G_n^\alpha(\mathbf{x}) \cap G_m^\alpha(\mathbf{y}) \right\} \leq \tilde{\mathbb{P}}^h\left\{ \bigcap_{T=n_0}^\infty E^{\alpha,h}_T(\mathbf{x}) \cap E^{\alpha,h}_T(\mathbf{y}) \right\} = \tilde{\mathbb{P}}^h\left\{ G^\alpha(\mathbf{x}) \cap G^\alpha(\mathbf{y}) \right\} = g^\alpha(h),
    \end{equation*}
    confirming \eqref{upper}.

    For the lower bound, we use a similar argument to that in Theorem \ref{Main2}. For any fixed $\epsilon > 0$, there exists $n_2$ (independent of $m, n$) such that for all $m, n > 0$,
    \begin{equation*}
        \tilde{\mathbb{P}}^h\left\{ \bigcup_{T=n_2}^{\max\{m,n\}} \left( H_T(\mathbf{x}) \geq \alpha T \right) \right\} < \epsilon,
    \end{equation*}
    with the empty event assigned probability $0$.  The key is to observe that the constraint $H_n(\mathbf{x})<\alpha n$ defining the event $E_n(\mathbf{x})$ is essentially guaranteed for large $n$ under $\tilde{\mathbb{P}}^h$: when $C(\mathbf{x},\mathbf{y})<n_1$ so the ranches $\mathbf{x}$ and $\mathbf{y}$ are well separated (note that this would be false however if $\mathbf{x}$ and $\mathbf{y}$ would not be separated: for instance if $\mathbf{x}=\mathbf{y}$, then $\tilde{H}_n(\mathbf{x})$ would be of the order of $2\beta n$, which is larger than $\alpha n$.) As a result, this follows directly from the proof of Theorem \ref{Main2} (where $\tilde{\mathbb{P}}\{G_n^\alpha(\mathbf{x})\} \geq 1 - \epsilon(n_0)$) by replacing $n_0$ with $n_2$.

    Consequently, the lower bound holds:
    \begin{equation}\label{lower}
        \tilde{\mathbb{P}}^h\left\{ G_n^\alpha(\mathbf{x}) \cap G_m^\alpha(\mathbf{y}) \right\} \geq \mathbb{P}\left\{ \bigcap_{T=n_0}^{n_2} \tilde{E}^{\alpha,h}_T(\mathbf{x}) \cap \tilde{E}^{\alpha,h}_T(\mathbf{y}) \right\} - 2\epsilon.
    \end{equation}

    Letting $m, n \to \infty$ yields
    \begin{equation*}
        \liminf_{m,n \to \infty} \tilde{\mathbb{P}}^h\left\{ G_n^\alpha(\mathbf{x}) \cap G_m^\alpha(\mathbf{y}) \right\} \geq \mathbb{P}\left\{ \bigcap_{T=n_0}^{n_2} \tilde{E}^{\alpha,h}_T(\mathbf{x}) \cap \tilde{E}^{\alpha,h}_T(\mathbf{y}) \right\} - 2\epsilon 
    \end{equation*}
    \begin{equation*}
        \geq \mathbb{P}\left\{ \bigcap_{T=n_0}^\infty \tilde{E}^{\alpha,h}_T(\mathbf{x}) \cap \tilde{E}^{\alpha,h}_T(\mathbf{y}) \right\} - 2\epsilon
    \end{equation*}
    Since $\epsilon > 0$ is arbitrary, this establishes \eqref{lower}. Combining \eqref{upper} and \eqref{lower} completes the proof, with uniformity in $C(\mathbf{x},\mathbf{y}) \leq n_1$ preserved throughout.

    The uniform convergence to the same limit also holds for time-inhomogeneous weights. When $f$ is decreasing, for fixed $n_2 \geq 0$, when $n$ is large enough (such that $\frac{n_2}{n} \to 0$ and $f\left(\frac{h}{n}\right) \to 1$ for all $h \in \{0, \dots, n_2\}$), the probability of the following finite intersection of events converges:
\begin{equation*}
\mathbb{P}\left\{ \bigcap_{T=n_0}^{n_2} \tilde{\bar{E}}^h_T(\mathbf{x}) \cap \tilde{\bar{E}}^h_T(\mathbf{y}) \right\} \to \mathbb{P}\left\{ \bigcap_{T=n_0}^{n_2} \tilde{E}^h_T(\mathbf{x}) \cap \tilde{E}^h_T(\mathbf{y}) \right\},
\end{equation*}
as $n \to \infty$, by continuity of Gaussian cumulative density function. Here, $\tilde{\bar{E}}^h_T(\mathbf{x})$ is defined as $\tilde{\bar{E}}^h_T(\mathbf{x}) := \left\{ \bar{H}_T < \alpha \sum_{i=1}^T f\left(\frac{i}{n}\right) \right\}$.

As a result, we obtain the same upper bound since $n_2\geq 0$ can be chosen arbitrarily. The lower bound, meanwhile, can be derived via the same argument to that used for the time-homogeneous case in the proof above.\end{proof}

\begin{remark}
     Indeed, we can see from the proof that monotone decreasing and continuity is not actually relied in the proof in this section: such convergence holds as long as $f$ is continuous at $0$, satisfying $f(1)=1$ and $f(x)\in [0,1]$ for all $x\in [0,1]$. This observation will be used in the proof in CREM model in Section \ref{CREM}, where the covariance structure could be more arbitrary.
\end{remark}

\subsubsection{Upper Bound for Time-inhomogeneous Critical Parameter}

The following proposition establishes the upper bound for $\bar{\beta}_c$, i.e.  when $\beta > \beta_c$, $\bar{W}_n^\beta$ converges to $0$ in probability. Together with Theorem \ref{Main2}, this confirms that the normalized partition function $\bar{W}_n^\beta$ exhibits distinct phase transitions in its non-degeneracy of in-probability limit. Furthermore, the critical parameter $\bar{\beta}_c$ is defined by: \begin{equation*}
    \bar{\beta}_c := \sup\left\{\beta \geq 0 \mid \mathbb{P}\{\bar{W}_\infty^\beta = 0\} > 0\right\}
\end{equation*}
exists and coincides with that for homogeneous weights, i.e., $\beta_c = \bar{\beta}_c$. This result underscores the insensitivity to environmental variations, indicating that the macroscopic behavior of the partition function is dominated by the ``early generations'' of the branching structure, rather than being affected by prescribed variations in the branch weights.

\begin{theorem} [Upper Bound for $\bar{\beta}_c$] \label{Inhomogeneous upper bound}
  Let $\beta > \sqrt{2\log d}$ and let $f: [0,1] \to [0,1]$ be a continuous decreasing function satisfying $f(0) = 1$ and $f(1) = 0$. Then the normalized partition function $\bar{W}_n^\beta$ converges in probability to the random variable $\bar{W}_\infty^\beta \equiv 0$ as $n \to \infty$. As a consequence, $\bar{\beta}_c := \sup\left\{\beta \geq 0 \mid \mathbb{P}\{\bar{W}_\infty^\beta = 0\} > 0\right\}$ exists and coincides with that for homogeneous weights, i.e., $\beta_c = \bar{\beta}_c$.
\end{theorem}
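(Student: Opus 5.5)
Our plan is to show that $\bar{W}_n^\beta\to 0$ in probability for $\beta>\sqrt{2\log d}$ by splitting off the early generations and comparing them with the homogeneous model. There we already know that $W_k^\beta\to 0$ almost surely (Theorem \ref{Main1}, or the Hu--Shi type bound). The main tool is a fractional moment estimate, which does not require a martingale. Fix $\theta\in(0,1)$ and $k=\lfloor \epsilon n\rfloor$ for a small $\epsilon>0$.

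First we write $\bar{W}_n^\beta$ via the branching property at generation $k$:
\[
\bar{W}_n^\beta=\frac{1}{d^k}\sum_{|\mathbf{u}|=k} e^{\beta \bar{H}_k(\mathbf{u})-\frac{\beta^2}{2}\sum_{i=1}^k f(\frac{i}{n})}\, R_n(\mathbf{u}).
\]
Here the $R_n(\mathbf{u})$ are i.i.d. unit-mean partition functions of the remaining $n-k$ generations. They are independent of $\mathcal{F}_k$, the sigma-field generated by the tree and the weights up to generation $k$. Conditioning on $\mathcal{F}_k$ and using subadditivity $(\sum a_i)^\theta\le\sum a_i^\theta$ together with Jensen's inequality $\mathbb{E}[R^\theta]\le 1$, we get
\[
\mathbb{E}\bigl[(\bar{W}_n^\beta)^\theta\bigr]\le \mathbb{E}\Bigl[\sum_{|\mathbf{u}|=k} d^{-k\theta}e^{\theta\beta\bar{H}_k(\mathbf{u})-\frac{\theta\beta^2}{2}S_k}\Bigr]=\exp\Bigl(k(1-\theta)\log d+\tfrac{\theta(\theta-1)\beta^2}{2}S_k\Bigr),
\]
where $S_k=\sum_{i=1}^k f(\frac{i}{n})$.

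Next, by continuity of $f$ at $0$, for any $\delta>0$ we can choose $\epsilon$ small so that $S_k\ge (1-\delta)k$. The exponent is then at most $k(1-\theta)\bigl(\log d-\frac{\theta\beta^2}{2}(1-\delta)\bigr)$. Since $\beta^2>2\log d$, we take $\theta$ close to $1$ and $\delta$ small so that the bracket is negative. Then the bound decays like $e^{-ck}$ with $k\sim\epsilon n\to\infty$. Hence $\mathbb{E}[(\bar{W}_n^\beta)^\theta]\to 0$, and Markov's inequality gives $\bar{W}_n^\beta\to 0$ in probability.

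Finally, the identification $\bar{\beta}_c=\beta_c$ follows by combining this with Theorem \ref{Main2}. For $\beta<\beta_c$ the in-probability limit is non-degenerate with the law of $W_\infty^\beta$. For $\beta>\beta_c$ the limit is $0$. So the supremum defining $\bar{\beta}_c$ is well defined and equals $\sqrt{2\log d}$; the value at $\beta=\beta_c$ itself is irrelevant for a supremum.

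The main obstacle is the choice of $\theta$ and $\epsilon$ so that the exponent stays strictly negative uniformly in $n$. This needs the lower bound $S_k\ge(1-\delta)k$ on the early window, which is exactly where continuity of $f$ at $0$ enters. The monotonicity of $f$ and the later generations play no role, because we discard them through $\mathbb{E}[R^\theta]\le1$. If $\beta$ is only slightly above $\sqrt{2\log d}$, then $\theta$ must be taken very close to $1$ and $\epsilon$ correspondingly small, but both choices are independent of $n$, so the argument goes through.
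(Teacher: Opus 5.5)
Your proposal is correct and follows essentially the paper's route: a fractional moment, a branching decomposition at an early generation, Jensen's inequality to discard the later generations, subadditivity on the early block, and continuity of $f$ at $0$ to make the exponent negative. You streamline it in two places, which yields an explicit exponential decay rate. First, you apply subadditivity directly instead of the paper's tilted measure plus Cauchy--Schwarz, which forces the paper to work with exponent $2\alpha<1$. Second, you take $k=\lfloor\epsilon n\rfloor$ instead of a fixed $n_1$ followed by $n\to\infty$ and then $n_1\to\infty$.
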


\begin{proof}
    We fix a constant $\alpha \in (0,1)$ and it suffices to show that $\mathbb{E}\left[(\bar{W}_n^\beta)^\alpha\right] \to 0$, as this implies $\bar{W}_n^\beta$ converges in probability to $\bar{W}_\infty^\beta \equiv 0$.

    First, note that by the independence of weight increments, the partition function $\bar{W}_n^\beta$ can be decomposed as:
    \begin{equation*} 
        \bar{W}_n^\beta = \sum_{\vert \mathbf{x} \vert = n_1} d^{-n_1} e^{-\frac{1}{2}\beta^2 \sum_{i=1}^{n_1} f\left(\frac{i}{n}\right) + \beta \bar{H}_{n_1}(\mathbf{x})} \sum_{\mathbf{y}\in D^\mathbf{x}_{n-n_1}} d^{-(n - n_1)} e^{-\frac{1}{2}\beta^2 \sum_{i = n_1+1}^n f\left(\frac{i}{n}\right) + \beta \bar{H}_{n_1\to n}(\mathbf{y})},
    \end{equation*}
    where $D^{\mathbf{x}}_{n-n_1}$ denotes the set of offspring in the sub-GW tree rooted at a branch $\mathbf{|x|}=n_1$ with $n-n_1$ generations, and $\bar{H}_{n_1\to n}(\mathbf{x}) = \sum_{i = n_1 }^n \sqrt{f\left(\frac{i}{n}\right)} w(x_i)$ is the Hamiltonian from generation $n_1$ to $n$.

    We define a transformed (random) probability measure $\bar{\mathbb{P}}^{n_1}$ by (where $\mathbf{P}^*$ is the survival probability of GW tree):
    \begin{equation}\label{Random measure}
        \frac{\mathrm{d}\bar{\mathbb{P}}^{n_1}(\mathbf{x})}{\mathrm{d}\mathbf{P}^*} = \frac{e^{-\frac{1}{2}\beta^2 \sum_{i=1}^{n_1} f\left(\frac{i}{n}\right) + \beta \bar{H}_{n_1}(\mathbf{x})}}{\bar{W}_n^{n_1}}
    \end{equation}
    Let $\mathbf{E}_{n_1}$ denote the expectation under $\bar{\mathbb{P}}^{n_1}$, and abbreviate the decomposed partition function by:
    \[
        \bar{W}_n^{n_1} = \sum_{\vert \mathbf{x} \vert = n_1} d^{-n_1} e^{-\frac{1}{2}\beta^2 \sum_{i=1}^{n_1} f\left(\frac{i}{n}\right) + \beta \bar{H}_{n_1}(\mathbf{x})}
    \]
    and
    \[
        \tilde{\bar{W}}_n^{n_1} = \sum_{\mathbf{y}\in D^\mathbf{x}_{n-n_1}} d^{-(n - n_1)} e^{-\frac{1}{2}\beta^2 \sum_{i = n_1 + 1}^n f\left(\frac{i}{n}\right) + \beta \bar{H}_{n_1\to n}(\mathbf{y})}.
    \]
    For any function $F:D_{n_1}\to\mathbb{R}$ (recall that $D_{n_1}$ is the set of branches $|\mathbf{x}|=n_1$), we have
    \begin{equation*}
        \mathbf{E}_{n_1}[F(\mathbf{x})] = \frac{1}{\bar{W}_n^{n_1}} \sum_{\vert \mathbf{x} \vert = n_1} \frac{1}{d^{n_1}} e^{\beta \bar{H}_{n_1}(\mathbf{x}) - \frac{1}{2}\beta^2 \sum_{i=1}^{n_1} f\left(\frac{i}{n}\right)} F(\mathbf{x}),
    \end{equation*}
    which is $\mathcal{F}_{n_1}$-measurable (where $\mathcal{F}_{n_1}$ is the $\sigma$-field generated by the GW tree and the weights up to generation $n_1$). 

    It follows that:
    \begin{equation*}
        \mathbb{E}\left[(\bar{W}_n^\beta)^\alpha\right] = \mathbb{E}\left[(\bar{W}_n^{n_1})^\alpha \left(\mathbf{E}_{n_1}\left[\tilde{\bar{W}}_n^{n_1}\right]\right)^\alpha\right].
    \end{equation*}
    By the Cauchy-Schwarz inequality, this implies:
    \begin{equation}\label{Fractional moment}
        \mathbb{E}\left[(\bar{W}_n^\beta)^\alpha\right] \leq \mathbb{E}\left[(\bar{W}_n^{n_1})^{2\alpha}\right]^{\frac{1}{2}} \mathbb{E}\left[\left(\mathbf{E}_{n_1}\left[\tilde{\bar{W}}_n^{n_1}\right]\right)^{2\alpha}\right]^{\frac{1}{2}}.
    \end{equation}

    To bound the second term on the right hand side in \eqref{Fractional moment}, for $0 < \alpha < \frac{1}{2}$, Jensen's inequality implies that:    \begin{equation*}
        \mathbb{E}\left[\left(\mathbf{E}_{n_1}\left[\tilde{\bar{W}}_n^{n_1}\right]\right)^{2\alpha}\right] \leq \mathbb{E}\left[\mathbf{E}_{n_1}\left[\tilde{\bar{W}}_n^{n_1}\right]\right]^{2\alpha}
    \end{equation*}
    By the independence of $\tilde{\bar{W}}_n^{n_1}$ and the $\sigma$-field $\mathcal{F}_t$ (where the tilted probability measure $\bar{\mathbb{P}}^{n_1}$ is $\mathcal{F}_t$-measurable), we have the expectation are the same under two probability measures:
    \begin{equation*}
        \mathbb{E}\left[\mathbf{E}_{n_1}\left[\tilde{\bar{W}}_n^{n_1}\right]\right]^{2\alpha} = \mathbb{E}\left[\mathbb{E}\left[\tilde{\bar{W}}_n^{n_1}\right]\right]^{2\alpha}=1
    \end{equation*}

    We will bound $\mathbb{E}\left[(\bar{W}_n^{n_1})^{2\alpha}\right]$ by:
    \begin{equation*}
        \mathbb{E}\left[(\bar{W}_n^{n_1})^{2\alpha}\right] \leq d^{n_1} \mathbb{E}\left[d^{-2\alpha n} e^{-\alpha\beta^2 \sum_{i=1}^{n_1} f\left(\frac{i}{n}\right) + 2\alpha\beta \bar{H}_{n_1}(\mathbf{x})}\right]
    \end{equation*}
    \begin{equation*}
        \leq d^{(1 - 2\alpha)n_1} e^{-\alpha\beta^2 \sum_{i=1}^{n_1} f\left(\frac{i}{n}\right)} \mathbb{E}\left[e^{2\alpha\beta \bar{H}_{n_1}(\mathbf{x})}\right]
    \end{equation*}
    \begin{equation}\label{super inhomo inequality}
        \leq d^{(1 - 2\alpha)n_1} e^{-\alpha\beta^2 \sum_{i=1}^{n_1} f\left(\frac{i}{n}\right)} e^{2\alpha^2\beta^2 n_1}=e^{(\log d (1 - 2\alpha)-\frac{\alpha\beta^2}{n_1} \sum_{i=1}^{n_1} f\left(\frac{i}{n}\right)+2\alpha^2\beta^2)n_1}.
    \end{equation}

    Finally, for any fixed $n_1 > 0$ and $\beta > \sqrt{2\log d}$, we have when letting $n\to\infty$:\begin{equation*}
        (1 - 2\alpha)\log d  - \frac{\alpha\beta^2}{n_1} \sum_{i=1}^{n_1} f\left(\frac{i}{n}\right) + 2\alpha^2\beta^2\to (1 - 2\alpha)\log d - \alpha\beta^2 + 2\alpha^2\beta^2
    \end{equation*} 
    This convergence holds because $f\left(\frac{i}{n}\right)$ can be arbitrarily close to $1$ for all $i < n_1$ when $n$ becomes sufficiently large. Subsequently, by letting $\alpha$ approach $\frac{1}{2}$ arbitrarily close, we have $\log d \cdot (1 - 2\alpha) - \alpha\beta^2 + 2\alpha^2\beta^2 < 0$. Moreover, the right-hand side of \eqref{super inhomo inequality} can be made arbitrarily small by choosing $n_1$ sufficiently large. Combining with the inequality \eqref{Fractional moment}, we conclude that $\bar{W}_n^\beta \to 0$ in probability when $\beta > \beta_c = \sqrt{2\log d}$.
\end{proof}

\subsection{Applications: Time-homogeneous BRWs and Continuous Random Energy Model}

In this section, we will see that the framework discussed in Section \ref{Sub-Critical} could be naturally applied to two related models: the time-homogeneous BRWs and the Continuous random energy model (CREM).

\subsubsection{Biggins Martingale Convergence: Theorem \ref{Main1}}

Using the strategy of the proof of these aforementioned results, we can provide a novel proof for Theorem \ref{Main1}. Since the proof follows similar lines to that of Theorem \ref{Main2}, we prefer not to state all the details. Indeed, we can see that the proof is much simpler compared to the proof of Theorem \ref{Main2}: since $(W_n^\beta)_{n\geq 0}$ defines a unit-mean martingale, to show $L^1$ convergence it is sufficient to show that it is uniformly integrable (i.e., it is suffice to show that $J_n$ is uniformly bounded in $L^2$ for each fixed $n_0>0$). The upper bound follows from the same argument as the last part of the proof of Proposition \ref{Inhomogeneous upper bound}: for $\alpha\in (0,1)$ we have:
\begin{equation*}
\mathbb{E}\left[(W_n^\beta)^\alpha\right]
\leq d^n \mathbb{E}\left[d^{-\alpha n} e^{-\frac{1}{2}\alpha\beta^2 n + \alpha\beta H_n(\mathbf{x})}\right]= e^{n\left((1-\alpha)\log d - \frac{1}{2}\alpha\beta^2 + \frac{1}{2}\alpha^2\beta^2\right)}.
\end{equation*}
We can see that if $\beta$ satisfies $\log d - \frac{1}{2}\alpha\beta^2 < 0$, then $\mathbb{E}\left[(W_n^\beta)^\alpha\right] \to 0$ as $n \to \infty$, and we may choose $\alpha$ arbitrarily close to $1$ so that it holds for any $\beta>\sqrt{2\log d}$. 

\subsubsection{Convergence of the Partition Function of CREM: Proposition \ref{CREMPARFUN}}\label{CREM}

Here, we will provide an extension of our result to the continuous random energy model (CREM), see Section \ref{modelBRWs} for the precise definition of this model. 

In \cite{BOVIER2004481}, Bovier and Kurkova showed that there exists a critical parameter $\beta_c=\frac{\sqrt{2\log 2}}{\sqrt{\hat{A}'(0)}}$ such that the following phase transition occurs: a) For all $\beta< \beta_c$, the main contribution to the partition function comes from an exponential number of the particles. b) For all $\beta>\beta_c$, the maximum starts to contribute significantly to the partition function. The quantity $\beta_c$ is sometimes referred to as the static critical inverse temperature of the CREM, and the coincidence with $\bar{\beta}_c$ (if we normalize $A$ so that $\hat{A}'(0)=1$) in the present paper further suggests the potential relationship between these two models. Under the framework of the present paper, we can see that although the exact approximation terms are different in this setting, the Gaussian increment and the covariance structure allow us to prove the following:

\begin{proposition} (Convergence of CREM Partition Function)
    Suppose for any $x\in [0,1]$, $A'(x)$ exists almost everywhere, satisfies $\sup_{x\in [0,1]}A'(x)\leq A(0)$ and continuous at $0$ (i.e., $A\in  C^0[0,1]\cap C^1(0)$). Then when $\beta<\beta_c=\frac{\sqrt{2\log 2}}{\sqrt{A'(0)}}$, we have $Z_n^\beta$ converges to $Z_\infty^\beta$ in $L^1$ and $Z_\infty^\beta$ has the same law as $W_\infty^\beta$, and when $\beta>\beta_c$, we have $Z_n^\beta$ converges to $0$ in probability.
\end{proposition}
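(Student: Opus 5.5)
We realise the CREM on the binary tree ($d=2$) as a time-inhomogeneous BRW and then repeat the arguments of Theorem \ref{Main2} and Theorem \ref{Inhomogeneous upper bound}. Set $f_n(i):=n\bigl(A(\tfrac{i}{n})-A(\tfrac{i-1}{n})\bigr)$ and $H_n(\mathbf{x})=\sum_{i=1}^n \sqrt{f_n(i)}\,w(x_i)$ with i.i.d.\ standard Gaussian weights. Then $\mathrm{Cov}(H_n(\mathbf{x}),H_n(\mathbf{y}))=\sum_{i\le C(\mathbf{x},\mathbf{y})} f_n(i)=nA(C(\mathbf{x},\mathbf{y})/n)$, which is exactly the CREM covariance. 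We normalise $A'(0)=1$ by replacing $\beta$ with $\beta\sqrt{A'(0)}$; this makes $\beta_c=\sqrt{2\log 2}$. Under this normalisation, $Z_n^\beta=2^{-n}\sum_{|\mathbf{x}|=n}e^{\beta H_n(\mathbf{x})-\frac{\beta^2}{2}nA(1)}$. The hypotheses give two facts:
\begin{itemize}
\item By the mean value theorem, $f_n(i)\le \sup A'\le A'(0)=1$.
\item For each fixed $i$, $f_n(i)\to A'(0)=1$ as $n\to\infty$, by continuity of $A'$ at $0$.
\end{itemize}
These are precisely the two properties of $f(i/n)$ used in Section \ref{Sub-Critical}: the bound $\le 1$ gives domination by the homogeneous sums, and convergence to $1$ for fixed indices is the remark's weaker requirement replacing monotonicity and continuity of $f$. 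All $L^2$ formulas then carry over with $\sum_{i\le h} f(i/n)$ replaced by $nA(h/n)$.

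\textbf{Subcritical regime.} Fix $\beta<\sqrt{2\log 2}$ and $\beta<\alpha<2\beta$, and define good events $\bar G_n^\alpha(\mathbf{x})=\{\forall T\ge n_0:\ H_T(\mathbf{x})<\alpha\, nA(T/n)\}$. We split $Z_n^\beta=\bar J_n+\bar K_n$ as in Theorem \ref{Main2}.
\begin{enumerate}
\item Tilt by Lemma \ref{Girsanov}. Since $\mathbb{E}\bar K_n\le\sum_{T\ge n_0}e^{-c\,nA(T/n)}$, we need this tail to vanish uniformly in $n$. This requires a linear lower bound $nA(T/n)\ge cT$ for all $T\le n$. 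Such a bound follows if $A(x)\ge cx$ on $[0,1]$, which holds near $0$ because $A'(0)=1$, and away from $0$ by monotonicity of $A$ if $A>0$ on $(0,1]$.
\item The bound \eqref{l2bound} becomes $\sum_h 2^{-h}e^{(\beta^2-\frac12(\alpha-2\beta)^2)nA(h/n)}$. Using $nA(h/n)\le h$, it is dominated termwise by the homogeneous series, so it is bounded uniformly in $n$ once $\alpha$ is close to $\beta$.
\item We compare $\bar J_n$ with the homogeneous $J_n$ through \eqref{difference2}. Truncating at $h\le n_1$, each term converges to its homogeneous counterpart because $f_n(i)\to1$ for $i\le n_1$. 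Lemma \ref{GNConvergence} holds verbatim, since only finitely many increments matter for the finite intersections, and the tails $h>n_1$ are uniformly small by step 2.
\end{enumerate}
This gives $\mathbb{E}|Z_n^\beta-W_n^\beta|\to0$. Combined with the $L^1$ convergence of $W_n^\beta$ to $W_\infty^\beta$ (Lemma \ref{JNConvergence} and the discussion after it), $Z_n^\beta\to W_\infty^\beta$ in $L^1$. The problem is posed on one common probability space, so the limit $Z_\infty^\beta$ is identified with $W_\infty^\beta$ and in particular has the same law.

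\textbf{Supercritical regime.} For $\beta>\sqrt{2\log 2}$, copy the proof of Theorem \ref{Inhomogeneous upper bound}. Split at a fixed generation $n_1$, apply Cauchy–Schwarz and Jensen with exponent $2\alpha<1$, and bound the first factor by
\[
\exp\Bigl(n_1\bigl[(1-2\alpha)\log 2-\alpha\beta^2\tfrac{nA(n_1/n)}{n_1}+2\alpha^2\beta^2\tfrac{nA(n_1/n)}{n_1}\bigr]\Bigr).
\]
Here $\mathrm{Var}(H_{n_1})=nA(n_1/n)$ is used exactly in the Gaussian moment computation. As $n\to\infty$ we have $nA(n_1/n)/n_1\to1$, so the exponent tends to $n_1[(1-2\alpha)\log 2-\alpha\beta^2+2\alpha^2\beta^2]$. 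This is negative for $\alpha$ near $1/2$, and taking $n_1$ large then sends $Z_n^\beta$ to $0$ in probability.

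\textbf{Main obstacle.} The hardest step is the uniform-in-$n$ control of $\bar K_n$ in step 1, which needs the linear lower bound on $nA(T/n)$. If $A$ vanishes on an interval the bound can fail; I would first try to restrict the good event to $T\le\epsilon n$, where $A(T/n)\ge T/(2n)$ by continuity of $A'$ at $0$. The extra condition on $T>\epsilon n$ would then be removed by observing that it affects neither the $L^2$ bound, which is governed by small $h$, nor the tilted mean.
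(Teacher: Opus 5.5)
Your proposal is correct and follows essentially the paper's own route, written out more carefully than the paper's sketch: you realise the CREM as a time-inhomogeneous BRW on the binary tree with increments $f_n(i)$, rerun the good-event/tilting $L^2$ comparison of Theorem \ref{Main2} using $f_n(i)\le 1$ and $f_n(i)\to 1$ for fixed $i$, and reuse the fractional-moment bound of Theorem \ref{Inhomogeneous upper bound} with variance $nA(n_1/n)$. Your ``main obstacle'' is vacuous, because monotonicity and $A'(0)=1$ give, for $\epsilon$ with $A(x)\ge x/2$ on $[0,\epsilon]$, the bound $A(x)\ge A(\epsilon)\ge \epsilon/2\ge(\epsilon/2)x$ on $[\epsilon,1]$, so $nA(T/n)\ge cT$ always holds; you should also drop the fallback, since its claim that the $L^2$ bound is governed by small $h$ is unjustified beyond the $L^2$ threshold $\beta^2=\log 2$, where the large-$h$ terms of $\mathbb{E}[\bar J_n^2]$ are tamed precisely by the good-event constraint at $T=h$.
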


\begin{proof} (Sketch)
    Indeed, we can easily see that the proof follows similar lines to that of Theorem \ref{Main2}, so we only sketch the proof here. Under the binary tree at generation $n$ and its covariance structure, for each branch $|\mathbf{x}|=n$, and any $0\leq k\leq N$, we can see that the Hamiltonian of $u$ at generation $k$ has law $\mathcal{N}\left(0,NA(\frac{k}{N})\right)$. As a result, the $L^2$ norm of $Z_n^\beta$ is given by:
\begin{equation*}
   \mathbb{E}\left[ \left( Z_n^\beta \right)^2 \right] = \sum_{h=0}^{n} 2^{-h} e^{\beta^2 nA(\frac{h}{N})}
\end{equation*}
Since $A'(x)$ is continuous at $x=0$, by applying a similar argument to that in Theorem \ref{L^2} we can see that it is uniformly bounded when $\beta<\frac{\sqrt{\log 2}}{\sqrt{A'(0)}}$. Then we may apply the same measure tilting argument and obtain the exact law of the Hamiltonian $H_n(\mathbf{x})$ (where for each $T\in \{0,\dots,n\}$ and $|\mathbf{x}|=T$, $H_T(\mathbf{x})\sim \mathcal{N}(0,A(\frac{T}{N}))$) under the tilted measure, similarly to that in Theorem \ref{Main2}, and the rest of the proof follows since the continuity of $A'$ at $0$ implies that for any finite $n_0>0$, we have $A(\frac{T}{N})\to A(0)$ for any $0\leq T\leq n_0$. 

The upper bound for $\beta_c$ follows from the same lines as in Theorem \ref{Inhomogeneous upper bound}.
\end{proof}

\subsection{Behavior of Partition Function at Criticality}\label{Critical}

For a long time—over two decades after the establishment of the phase transition of Biggins martingale—the exact behavior of the time-homogeneous partition function $W_n^\beta$ at the critical value $\beta = \beta_c$ remained unknown, except that it lies in the strong disorder regime (i.e. $W_n^\beta$ converges to 0 almost surely). Naturally, this raises the question of the decay rate of $\bar{W}_n^\beta$ at criticality. Since $\beta_c = \bar{\beta_c}$ (as shown in Theorem \ref{Main2}), we will henceforth denote both by $\beta_c$ for the remainder of this section.  

The problem of determining the exact asymptotic behavior at criticality was resolved in the influential work of Hu and Shi \cite{HS09}, who established the following key result (Theorem 1.3 in \cite{HS09}):  
\begin{equation*}
    W_n^{\beta_c} = n^{-\frac{1}{2} + o(1)} \quad \text{a.s.}
\end{equation*}  

Notably, this implies that even though $\beta = \beta_c$ lies in the strong disorder regime, the partition function decays only polynomially. The next theorem suggests that this asymptotic behavior extends to time-inhomogeneous weights as well, and that the universality principle extends to the critical regime (though convergence is now almost sure). 

\begin{theorem} [Asymptotics of partition function at criticality]
    At the critical parameter $\beta_c$, we have $\bar{W}_n^{\beta_c}=n^{-\frac{1}{2}+o(1)}$ \text{almost surely}. 
\end{theorem}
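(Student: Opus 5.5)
We prove the two bounds separately: the upper bound $\bar{W}_n^{\beta_c}\le n^{-1/2+o(1)}$ and the lower bound $\bar{W}_n^{\beta_c}\ge n^{-1/2-o(1)}$, each almost surely, working on the survival event under $\mathbf{P}^*$. As in the proofs of Theorem \ref{Main2} and Theorem \ref{Inhomogeneous upper bound}, we split the tree at an intermediate generation $k=k_n$ with $k_n\to\infty$ and $k_n/n\to 0$; the natural first choice is $k_n=\lfloor (\log n)^{3}\rfloor$ or $\lfloor n^{\delta}\rfloor$. We then write
\begin{equation*}
\bar{W}_n^{\beta_c}=\sum_{|\mathbf{x}|=k}d^{-k}e^{\beta_c\bar{H}_k(\mathbf{x})-\frac{\beta_c^2}{2}\sum_{i=1}^k f(\frac{i}{n})}\,\tilde{\bar{W}}^{(\mathbf{x})}_{k\to n},
\end{equation*}
where the inner factors are independent of $\mathcal{F}_k$ and have unit mean. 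Over the first $k$ generations we have $f(i/n)=1-o(1)$ uniformly, so the prefactor should be compared with the homogeneous $W_k^{\beta_c}$. Hu and Shi \cite{HS09} give $W_k^{\beta_c}=k^{-1/2+o(1)}$. The difficulty is that $k_n\ll n$, so we need the inner sums to contribute the correct additional polynomial factor $(n/k)^{-1/2}$.

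To obtain that factor we use the monotonicity of $f$. After generation $k$ the variances $f(i/n)$ are smaller, so the effective inverse temperature $\beta_c\sqrt{f(i/n)}$ is at most $\beta_c$ and the tail of the walk lives in the subcritical or critical regime. The plan is to compare $\bar{W}_n^{\beta_c}$ with $W_n^{\beta_c}$ directly through Hu and Shi's characterisation of the critical decay. Their $n^{-1/2}$ comes from the probability that the spine, tilted to drift $\beta_c$, stays below the barrier $\beta_c T+O(\log n)$ for all $T\le n$, which is a ballot-type event of probability about $n^{-1/2}$. We would redo this under the Girsanov tilt of Lemma \ref{Girsanov} applied to $\bar{H}_n$. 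The tilted spine becomes a walk with increments of variance $f(i/n)$ and mean $\beta_c f(i/n)$, and the barrier becomes $\beta_c\sum_{i\le T}f(i/n)+O(\log n)$. Two tools carry this out. For the \emph{upper bound} we take a truncated first moment, using the good-environment events $\bar{G}_n$ of Theorem \ref{Main2} with $\alpha=\beta_c$ plus a logarithmic correction; the mass outside these events is controlled by the many-to-one bound $\mathbb{P}(\max_{|\mathbf{x}|=T}\bar{H}_T(\mathbf{x})>\beta_c\sum_{i\le T} f(i/n)+C\log n)$, which is summable. For the \emph{lower bound} we take a truncated second moment with the same events, as in \eqref{J_n_bar}.

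The key estimate is a ballot theorem for the time-changed walk. Setting $\tau(T)=\sum_{i\le T}f(i/n)$, the tilted centred walk is a Gaussian walk in the clock $\tau$, and the barrier is linear in $\tau$. The probability of staying below the barrier is therefore of order $\tau(n)^{-1/2}$, which equals $n^{-1/2}$ up to constants as long as $\int_0^1 f>0$. If instead $\int_0^1 f=0$, we still get $\tau(n)\ge c\,n\epsilon$ from the continuity of $f$ at $0$. A polynomial factor of this kind is exactly what the $n^{o(1)}$ tolerance absorbs. We then pass from the $L^1$ and $L^2$ estimates to almost sure statements along a geometric subsequence $n_j=2^j$ with Borel–Cantelli, and interpolate between consecutive $n_j$. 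For the upper bound the interpolation uses the fractional moment bound of Theorem \ref{Inhomogeneous upper bound}; for the lower bound it uses the branching property at generation $k$ with $k\to\infty$, together with the Paley–Zygmund inequality.

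The main obstacle is the interpolation step. Because $f(\cdot/n)$ depends on $n$, the sequence $\bar{W}_n$ has no monotone or martingale structure in $n$, so controlling $\bar{W}_n$ for $n_j\le n<n_{j+1}$ is not automatic. We would first try to prove uniform-in-$n$ bounds on $\mathbb{E}[(\bar{W}_n^{\beta_c})^{\gamma}]$ of the form $n^{-\gamma/2+o(1)}$ for $\gamma<1$, combined with Markov's inequality, to get polynomially summable deviation probabilities over all $n$, not only over the subsequence. A second delicate point is that the lower bound needs the $0$–$1$ law on survival to upgrade ``with positive probability'' to ``almost surely''. For this we plan to use the branching at generation $k$: each of the roughly $d^k$ subtrees independently has a positive chance of achieving the lower bound.
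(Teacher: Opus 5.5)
Your key step, the ballot estimate of order $\tau(n)^{-1/2}$ for the barrier $\beta_c\tau(T)+O(\log n)$ on $\bar H_T$, fails because that barrier is not a valid truncation.

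By many-to-one, the expected number of particles at generation $T$ with $\bar H_T(\mathbf{x})>\beta_c\tau(T)+C\log n$ is $d^T\,\mathbb{P}(\mathcal{N}(0,\tau(T))>\beta_c\tau(T)+C\log n)$. This is of order $\tau(T)^{-1/2}n^{-\beta_c C}e^{\log d\,(T-\tau(T))}$, which is enormous once $T-\tau(T)\gg\log n$. In fact, for decreasing variance the maximum of $\bar H_T$ is to first order $\beta_c\sum_{i\le T}\sqrt{f(i/n)}$. That lies above your barrier by an amount linear in $T$, so the crossing probability you call summable tends to $1$.

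What you can justify is the following. For fixed $n$, $T\mapsto d^{-T}\sum_{|\mathbf{x}|=T}e^{\beta_c\bar H_T(\mathbf{x})-\frac{\beta_c^2}{2}\tau(T)}$ is a mean-one martingale. The first-crossing stopping line then gives, with probability at least $1-e^{-a}$,
\[
V_T(\mathbf{x}):=\beta_c\bar H_T(\mathbf{x})-\tfrac{\beta_c^2}{2}\tau(T)-T\log d\le a\qquad\text{for all } T\le n,\ |\mathbf{x}|=T,
\]
that is, $\bar H_T\le\frac{\beta_c}{2}(T+\tau(T))+a/\beta_c$. Under the Girsanov tilt the spine's $V_T$ is Gaussian with variance $2\log d\,\tau(T)$ and mean $-\log d\,(T-\tau(T))$. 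Your time change removes the variance inhomogeneity but not this drift away from the barrier. So the ballot penalty accrues only up to the time $T_*$ at which $\sum_{i\le T_*}(1-f(i/n))\asymp\sqrt{T_*}$. The truncated first moment is therefore of order $T_*^{-1/2}$, not $\tau(n)^{-1/2}$.

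This changes the answer, not just the proof. For $f(t)=1-t$ one has $T_*\asymp n^{2/3}$, hence $\bar W_n^{\beta_c}=O_{\mathbb{P}}(n^{-1/3})$. A truncated second moment with the same barrier (plus the usual repulsion of the spine) and Paley--Zygmund then give $\mathbb{P}(\bar W_n^{\beta_c}\ge c\,n^{-1/3})\ge c'>0$ for all large $n$. This is incompatible with $\bar W_n^{\beta_c}=n^{-1/2+o(1)}$ almost surely.

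It is the mechanism you yourself pointed to: the effective inverse temperature $\beta_c\sqrt{f(T/n)}$ lies $\asymp T/n$ below $\beta_c$. That halts the critical $T^{-1/2}$ decay once $T/n\asymp T^{-1/2}$. More generally, $1-f(t)\asymp t^{\kappa}$ gives decay $n^{-\kappa/(2\kappa+1)}$. So the rate $n^{-1/2+o(1)}$ needs $1-f$ to vanish faster than every power at $0$ (e.g.\ $f\equiv1$ near $0$). Under such a hypothesis your ballot strategy is the right one, with the barrier $V_T\le a$. The lower-bound half of your plan is not affected, since any truncation bounds $\bar W_n^{\beta_c}$ from below.

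Following the paper does not avoid the problem. There, Kahane's inequality with concave $x^\gamma$ legitimately gives only $\mathbb{E}[(\bar W_n^{\beta_c})^\gamma]\ge\mathbb{E}[(W_n^{\beta_c})^\gamma]$. The upper-bound step applies Kahane even though the comparison field $(1-\delta)\beta_c H_n$ has the larger covariance at large overlaps, so the claimed inequality does not follow. Also, the Borel--Cantelli step with $a_n=-\frac12\log n$ would need $\sum_n n^{-c\epsilon}<\infty$ for small $\epsilon$, which fails. That is the very interpolation issue you flagged.
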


\begin{proof}
    
    First, we will see that it is suffice to show the same asymptotics holds for all fractional moments for $\alpha\in (0,1)$. The arguments here are fairly standard, and variations of these arguments are also employed in related works \cite{HS09} and \cite{alberts2012nearcriticalscalingwindowdirected}:
    
    \begin{lemma}(Upper bounds)
        Let $(a_n)_{n \in \mathbb{N}}$ be a sequence of real numbers such that $|a_n| \to \infty$ as $n \to \infty$. Suppose that for every $\alpha \in (0,1)$, we have:
        \begin{equation*}
            \mathbb{E}[(\bar{W}_n^{\beta_c})^{\alpha}] = e^{\alpha a_n(1+ o(1))}.
        \end{equation*}
        Then $\bar{W}_n^{\beta_c} \leq e^{a_n (1 + o(1))}$ in probability as $n \to \infty$. For our purposes, we set $a_n=-\frac{1}{2}\log n$. Moreover, if for any $\epsilon > 0$, $\sum_{n \geq 1} e^{- \epsilon |a_n|} < \infty$, then $\bar{W}_n^{\beta_c}\leq e^{a_n (1 + o(1))}$ almost surely.
    \end{lemma}

    \begin{proof}
        
        Fix $\epsilon > 0$ and let $\alpha \in (0,1)$. By the given assumption, $\mathbb{E}[(\bar{W}_n^{\beta_c})^{\alpha}] \leq e^{\alpha a_n + \epsilon \alpha |a_n| /2}$ for all sufficiently large $n$. Applying Markov's inequality, we obtain:
        \begin{align*}
        \mathbb{P}(\bar{W}_n^{\beta_c} > e^{a_n + \epsilon |a_n|})=\mathbb{P}((\bar{W}_n^{\beta_c})^\alpha > e^{\alpha a_n + \epsilon \alpha|a_n|}) &\leq e^{-\alpha a_n - \epsilon \alpha |a_n|} \mathbb{E}[(\bar{W}_n^{\beta_c})^{\alpha}] \\
        & \leq e^{-\epsilon \alpha |a_n|/2}
        \end{align*}
        By the assumption that $|a_n| \to \infty$, it follows that $\bar{W}_n^{\beta_c} \leq e^{a_n(1 + o(1))}$ in probability. The almost sure convergence of the statement follows from Borel-Cantelli lemma.
    \end{proof}

    \begin{lemma}[Lower bounds]
        Let $(a_n)_{n \in \mathbb{N}}$ be a sequence satisfying $(\log n)^\frac{1}{2} \ll |a_n| \ll n$. Assume that for all $\alpha \in (0,1)$, we have:
        \[ \mathbb{E}[(\bar{W}_n^{\beta_c})^\alpha ]  = e^{\alpha a_n(1 + o(1))} .\]
        Then almost surely,
        \[ \bar{W}_n^{\beta_c}  \geq e^{a_n(1+o(1))} . \]
    \end{lemma}

    \begin{proof}
        We abbreviate the notation by defining the ``partial free energy'' for the time-inhomogeneous weight as (recall the definition of $\bar{H}_n^{\tau_n}(w)$ in the proof of Theorem \ref{Inhomogeneous upper bound}):
        \[ \bar{V}_{\tau_n}^n(w)=\beta_c\bar{H}_{n}^{\tau_n}(w)-\frac{1}{2}\beta_c^2\sum_{i=1}^{\tau_n}f(\frac{i}{n})-\tau_n\log d \]
        Fix $\epsilon > 0$, and we will show that the probability that $\mathbb{E}[(\bar{W}_n^{\beta_c})^\alpha]$ is beyond the rate $e^{\alpha a_n \pm \epsilon\alpha |a_n|}$ tends to 0 as $n\to\infty$. By the given assumption of the asymptotic rate of fractional moment, for any $\alpha \in (0,\frac{1}{2})$, we have:
        \[ \mathbb{E}[(\bar{W}_n^{\beta_c})^\alpha] \geq e^{\alpha a_n - \frac{\epsilon}{4}\alpha |a_n|} \]
        and
        \[ \mathbb{E}[ (\bar{W}_n^{\beta_c})^{2\alpha} ] \leq e^{2\alpha a_n + \frac{\epsilon}{4}|a_n|} \]
        for all sufficiently large $n$. Applying the Paley-Zygmund inequality, we derive:
        \begin{align}
        \mathbb{P} \left\{  \bar{W}_n^{\beta_c} > e^{a_n-\epsilon|a_n|}  \right\} &\geq \left( 1 - \frac{e^{\alpha ( a_n- \epsilon|a_n|)}}{\mathbb{E}[ (\bar{W}_n^{\beta_c})^\alpha]} \right)^2 \frac{\mathbb{E}[(\bar{W}_n^{\beta_c})^\alpha]^2}{\mathbb{E}[ (\bar{W}_n^{\beta_c})^{2\alpha}]} \notag \\
         &\geq \left( 1 -  e^{-3\epsilon\alpha|a_n|/4} \right)^2 e^{-\frac{3\epsilon}{4}|a_n|} \geq e^{-\epsilon|a_n|} \, , \label{rough_bound}
        \end{align}
        for all sufficiently large $n$. Now define $\tau_n$ to be:
        \[ \tau_n = \lceil\frac{2\epsilon|a_n|}{\log d}\rceil \]
        so that $\tau_n \ll n$ for all sufficiently large $n$. We then have the decomposition:
        \begin{align*}
        \bar{W}_n^{\beta_c} &= \sum_{|w| = \tau_n} \frac{1}{d^{\tau_n}}e^{\beta_c\bar{H}_n^{\tau_n}(w)-\frac{1}{2}\beta_c^2\sum_{i=1}^{\tau_n}f(\frac{i}{n})} \!\!\!\! \sum_{\substack{v \in D_{n-\tau_n}(w)}} \frac{1}{d^{n-\tau_n}}e^{\beta_c\bar{H}_{\tau_n\to n}(w)-\frac{1}{2}\beta_c^2\sum_{i=\tau_n+1}^{n}f(\frac{i}{n})} \\
        &\geq \exp \big\{ \max_{|w| = \tau_n} \bar{V}_{\tau_n}^n(w) \big \} \sum_{|w|=\tau_n} \!\!\!\ \sum_{\substack{v \in D_{n-\tau_n}(w)}} e^{\beta_c\bar{H}_{\tau_n\to n}(w)-\frac{1}{2}\beta_c^2\sum_{i=\tau_n+1}^{n}f(\frac{i}{n})}
        \end{align*}
        We denote the rightmost sum by $Y_{n-\tau_n}(w)$. The above inequality then yields the following estimate:
        \begin{align*}
        \mathbb{P} \left\{\bar{W}_n^{\beta_c} \leq e^{a_n - \epsilon |a_n|} \exp\{ \max_{|v|=\tau_n} \bar{V}_{\tau_n}^n (v)\} \right\} & \leq \mathbb{P} \left\{ \sum_{|w| = \tau_n} Y_{n-\tau_n}(v) \leq e^{a_n - \epsilon|a_n|} \right\} \\
         & \leq \mathbb{P} \left\{ Y_{n-\tau_n} (v)\leq e^{a_n - \epsilon|a_n|}  \right\}^{d^{\tau_n}} 
        \end{align*}
        Equation \eqref{rough_bound} implies that this expression is bounded above by $\exp \{ - e^{ \epsilon|a_n|}\}$ (since the fluctuations of the increments are decreasing). Therefore, by the assumption that $|a_n| \gg (\log n)^{\frac{1}{2}}$, these probabilities are summable. By the Borel-Cantelli lemma, it follows that almost surely:
        \begin{align}\label{eqn:exp_lower_bound}
        Y_n \geq e^{a_n - \epsilon|a_n|} \exp \big \{ \max_{|v|=\tau_n} \bar{V}_{\tau_n}^n (v)\big	\} \,,
        \end{align}
        for all sufficiently large $n$.

        However, it is well known that there exists an explicit constant $C > 0$ such that:\begin{equation*}
            \frac{1}{\tau_n } \max_{|v| =\tau_n }\{\bar{V}_{\tau_n}^n(v)\} \to C
        \end{equation*} almost surely (this maximum corresponds to the position of the rightmost particle in the time-homogeneous branching random walk system. In this case, since $\tau_n\ll n$, this result also holds for the first $\tau_n$ steps of the time-inhomogeneous environment). Thus,
        \[ \exp \big \{  \max_{|v| =\tau_n }\{\bar{V}_{\tau_n}^n(v)\} \big	\} = e^{C^*\epsilon |a_n| (1 + o(1))} \]
        for some constant $C^* > 0$ (independent of $\epsilon$). Hence, almost surely, $Y_n \geq e^{a_n(1+o(1))}$.
    \end{proof}

    To conclude the proof, we apply the following classical result in GMC theory, which is the so-called Kahane's concentration inequality (this inequality originated in \cite{Kahane}; see \cite{robert2008gaussianmultiplicativechaosrevisited} for a proof):
    
\begin{lemma} (Kahane's Concentration Inequality)\label{Kahaneineq}
    Let $F$ be a concave function such that:
    \begin{equation*}
        \forall x\in \mathbb{R}_+, |F(x)|\leq M\left(1+|x|^m\right)
    \end{equation*} 
    for some positive constants $M,m$, and let $\sigma$ be a Radon measure on the Borel subsets of $\mathbb{R}^d$. Given a bounded Borel set $A$, let $(X_r)_{r\in A}, (Y_r)_{r\in A}$ be two continuous centered Gaussian processes with continuous covariance kernels $K_X$ and $K_Y$ such that:
    \begin{equation*}
        \forall u,v\in A, K_X(u,v)\leq K_Y(u,v)
    \end{equation*}
    Then we have:
    \begin{equation*}
        \mathbb{E}\left[F\left(\int_A e^{X_r-\frac{1}{2}\mathbb{E}[X_r^2]}\sigma(dr)\right)\right]\geq \mathbb{E}\left[F\left(\int_A e^{Y_r-\frac{1}{2}\mathbb{E}[Y_r^2]}\sigma(dr)\right)\right]
    \end{equation*}
\end{lemma}
    
In this case, we may couple the law of the G-W tree for both time-homogeneous and time-inhomogeneous BRWs such that they have the same offspring distribution, and embed each realization of the G-W tree into the unit interval (i.e., $A=[0,1]$; see Section \ref{modelBRWs} for the detailed construction of such an embedding), and let the Borel measure $\sigma$ be the point mass associated to each realization. Note that for any $0<\alpha<1$, $F(x)=x^\alpha$ satisfies the conditions in the Lemma \ref{Kahaneineq}.
    
The upper bound of $\bar{W}_n^{\beta_c}$ follows by a similar argument as in Proposition \ref{Inhomogeneous upper bound}, except that Hölder's inequality is used here instead of the Cauchy-Schwarz inequality. We need to show that for any $\bar{\epsilon}>0$, we have $\mathbb{E}[(\bar{W}_n^\beta)^\alpha]\leq n^{-\frac{1}{2}+\bar{\epsilon}}$. For all $p,q>1$ satisfying $1/p+1/q=1$, and recalling the expectation $\mathbf{E}_{\lfloor n^{1-\epsilon} \rfloor}$ under the (random) measure $\bar{\mathbb{P}}^{\lfloor n^{1-\epsilon} \rfloor}$ defined in \ref{Random measure}, we have:
\begin{equation*}
    \mathbb{E}\left[(\bar{W}_n^\beta)^\alpha\right] \leq \mathbb{E}\left[(\bar{W}_n^{\lfloor n^{1-\epsilon} \rfloor})^{p\alpha}\right]^{1/p}\mathbb{E}\left[\left(\mathbf{E}_{\lfloor n^{1-\epsilon} \rfloor}\left[\tilde{\bar{W}}_n^{\lfloor n^{1-\epsilon} \rfloor}\right]\right)^{q\alpha}\right]^{1/q}
\end{equation*}

Note that for any $q>1$, the second term on the right-hand side of the above inequality equals 1. As a result, for any fixed $\alpha>0$ and $\epsilon>0$, we may let $p$ be arbitrarily close to 1 so that $p^2\alpha<1$. Since $f$ is continuous at 0, for any fixed $\epsilon>0$, when $n$ is large enough, there exists $\delta>0$ such that $\lfloor (1-\delta)n \rfloor<\sum_{i=1}^{\lfloor n^{1-\epsilon}\rfloor} \left(f(\frac{i}{n})-1\right)< n $. Then we can see that (where the inequality \ref{Kahaneineqused} follows from Kahane's concentration inequality \ref{Kahaneineq}):
\begin{equation*}
   \mathbb{E}\left[(\bar{W}_n^{n^{1-\epsilon}})^{p\alpha}\right]=\mathbb{E}\left[\left(\frac{1}{d^n}\sum_{\mathbf{|x|}=n} e^{\beta_c \bar{H}_n^{\lfloor n^{1-\epsilon}\rfloor}(\mathbf{x})-\frac{1}{2}\sum_{i=1}^{\lfloor n^{1-\epsilon}\rfloor-1} \beta_c^2}\right)^{p\alpha}\right]
\end{equation*}
\begin{equation}\label{Kahaneineqused}
    \leq\mathbb{E}\left[\left(\frac{1}{d^n}\sum_{|\mathbf{x}|=n} e^{(1-\delta)\beta_cH_n(\mathbf{x})-\frac{1}{2}\lfloor (1-\delta)n \rfloor \beta_c^2}\right)^{p\alpha}\right]
\end{equation}
\begin{equation*}
    \leq \mathbb{E}\left[\left(\frac{1}{d^{\lfloor (1-\delta)n \rfloor}}\sum_{|\mathbf{x}|=\lfloor (1-\delta)n \rfloor} e^{\beta_cH_{\lfloor (1-\delta)n \rfloor}(\mathbf{x})-\frac{1}{2}\lfloor (1-\delta)n \rfloor \beta_c^2}\right)^{p^2\alpha}\right]^{1/p}\mathbb{E}\left[\left(\mathbf{E}_{\lfloor (1-\delta)n \rfloor}[\tilde{W}_n^{\lfloor (1-\delta)n \rfloor}]\right)^{pq\alpha}\right]^{1/q}
\end{equation*}
\begin{equation*}
    =\mathbb{E}\left[\left(\bar{W}_{\lfloor (1-\delta)n \rfloor}^\beta\right)^{p^2\alpha}\right]^{1/p}\times 1
\end{equation*}
where the inequality Kahane's concentration inequality (Lemma \ref{Kahaneineq}),in the expressions above, analogous to that in the proof of Theorem \ref{Inhomogeneous upper bound}, we define $\mathbf{E}_{\lfloor (1-\delta)n \rfloor}$ to be the expectation under the transformed measure $\bar{\mathbb{P}}^{\lfloor (1-\delta)n \rfloor}$ (recall $\mathbf{P}^*$ is the survival probability of GW tree):
\begin{equation}
    \frac{\mathrm{d}\bar{\mathbb{P}}^{\lfloor (1-\delta)n \rfloor}(\mathbf{x})}{\mathrm{d}\mathbf{P}^*} = \frac{e^{-\frac{1}{2}\beta^2 \lfloor (1-\delta)n \rfloor + \beta H_n^{\lfloor (1-\delta)n \rfloor}(\mathbf{x})}}{W_n^{\lfloor (1-\delta)n \rfloor}}
\end{equation}
The ``first step'' partition function denoted $W_n^{\lfloor (1-\delta)n \rfloor}$ is defined as:
\begin{equation*}
    \tilde{W}_n^{\lfloor (1-\delta)n \rfloor}=\frac{1}{d^{\lfloor (1-\delta)n \rfloor}}\sum_{|\mathbf{x}|=n} e^{\beta_cH_n^{\lfloor (1-\delta)n \rfloor}(\mathbf{x})-\frac{1}{2}\lfloor (1-\delta)n \rfloor\beta_c^2} \text{ where } H_n^{\lfloor (1-\delta)n \rfloor}(\mathbf{x})=\sum_{i=1}^{\lfloor (1-\delta)n \rfloor} w(x_i)
\end{equation*} 
and its ``complement'' denoted $\tilde{\bar{W}}_n^{\lfloor (1-\delta)n \rfloor}$ is defined as:
\begin{equation*}
    \tilde{\bar{W}}_n^{\lfloor (1-\delta)n \rfloor}=\frac{1}{d^{\lceil \delta n \rceil}}\sum_{\mathbf{y}\in D_{\lfloor (1-\delta)n \rfloor}^\mathbf{x}} e^{\beta_cH_n^{\lfloor (1-\delta)n \rfloor\to n}(\mathbf{y})-\frac{1}{2}\lceil \delta n \rceil\beta_c^2}, \text{ where } H_n^{\lfloor (1-\delta)n \rfloor\to n}(\mathbf{x})=\sum_{i=\lfloor (1-\delta)n \rfloor}^n w(x_i)
\end{equation*}
    
As a result, since the asymptotic behavior of $W_n^{\beta_c}$ is $n^{-\frac{1}{2}+o(1)}$, we have (by the dominated convergence theorem) the convergence of the fractional moment:
\begin{equation*}
    \mathbb{E}\left[(\bar{W}_n^{n^{1-\epsilon}})^{p^2\alpha}\right]^{1/p}\leq\mathbb{E}\left[\left(\bar{W}_{\lfloor (1-\delta)n \rfloor}^\beta\right)^{p^2\alpha}\right]^{1/p^2}\leq \left(\lfloor (1-\delta)n \rfloor\right)^{-\frac{1}{2p^2}+o(1)}\leq n^{-\frac{1}{2p^2}(1-\delta)+o(1)}
\end{equation*}
    
Then we can derive the upper bound for the fractional moment:
\begin{equation*}
    \limsup_{n\to\infty}\mathbb{E}\left[(\bar{W}_n^\beta)^\alpha\right]\leq n^{-\frac{1}{2p^2}(1-\delta)+o(1)}
\end{equation*}
which holds for any $p>1$ and $\epsilon>0$. Letting $p\to 1$ and using the fact that $\delta$ can be made arbitrarily small by taking $\epsilon$ sufficiently small yields the desired upper bound.
    
Similarly, the lower bound is obtained via a direct comparison of the fractional moments of the two partition functions:
\begin{equation*}
    \mathbb{E}\left[\left(\bar{W}_n^\beta\right)^\alpha\right]\geq \mathbb{E}\left[\left(W_n^\beta\right)^\alpha\right]
\end{equation*}

This finishes the proof of the asymptotic behavior of $\bar{W}_n^\beta$ at $\beta=\bar{\beta}_c$.
\end{proof}

\section{Further Directions}\label{Future}

Here, we briefly contextualize our established results within the landscape of related current research areas (including BRWs, directed polymers and GMC) and outline potential directions for future work:

\subsection{Behavior of Partition Function at Criticality}

The derivative martingale is another key martingale associated with branching random walks and plays a central role in the study of their extreme values (see Section 5.4 of \cite{Shi2015} for more details). At the critical parameter $\beta=\beta_c$, Biggins and Kyprianou \cite{Biggins_Kyprianou_2004} showed that the derivative martingale (as the name suggests, by taking ``derivative'' with respect to $\beta$ of $W_n^\beta$ at $\beta_c$), defined by:  
\begin{equation*}  
    D_n:=\sum_{|\mathbf{x}|=n} \left(\beta_c H_n(\mathbf{x})-\frac{\beta_c^2 n}{2}\right)e^{\beta_c H_n(\mathbf{x})-\frac{\beta_c^2 n}{2}}  
\end{equation*}  
converges almost surely to a non-degenerate limit $D_\infty>0$. Furthermore, Elie and Shi \cite{ElieAidekonZhanShiAOP809} established that $D_\infty$ has the same law as a constant multiple of the limit in probability of $n^{\frac{1}{2}}W_n^{\beta_c}$ (the constant depends only on the law of the environment and the Galton-Watson tree):  

\begin{theorem}[\cite{ElieAidekonZhanShiAOP809}, Theorem 1.1]
We have the Seneta–Heyde scaling for Derivative martingale $D_\infty$:  
    \begin{equation*}  
        \lim_{n\to\infty} n^{\frac{1}{2}}W_n^{\beta_c}= \left(\frac{2}{\pi\sigma^2}\right)^{\frac{1}{2}}D_\infty  
    \end{equation*}  
where $\sigma^2$ depends on the law of environment and G-W tree only. 
\end{theorem}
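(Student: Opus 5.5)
This is the Aïdékon–Shi theorem, which the paper quotes from \cite{ElieAidekonZhanShiAOP809} rather than proves. I would reconstruct it with the spine/peeling argument for the homogeneous BRW at $\beta=\beta_c$. Work in the additive normalization $V(\mathbf{x}) = -(\beta_c H_n(\mathbf{x}) - \tfrac{\beta_c^2}{2}n - n\log d)$. In these variables the boundary case holds: $\sum_{|\mathbf{x}|=1}e^{-V(\mathbf{x})}$ and $\sum_{|\mathbf{x}|=1} V(\mathbf{x}) e^{-V(\mathbf{x})}$ both have mean $1$ and $0$ respectively. Then $W_n^{\beta_c}=\sum_{|\mathbf{x}|=n}e^{-V(\mathbf{x})}$, and $D_n$ is, up to sign, $\sum V(\mathbf{x})e^{-V(\mathbf{x})}$.

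\textbf{Step 1: truncate to a barrier.} Fix $L>0$ and introduce the truncated derivative martingale
\[
D_n^{(L)}=\sum_{|\mathbf{x}|=n} R\bigl(V(\mathbf{x})+L\bigr)\,e^{-V(\mathbf{x})}\,\mathbf{1}\{\min_{k\le n}V(x_k)\ge -L\},
\]
where $R$ is the renewal function of the ladder heights of the spine random walk. Under the size-biased measure $\mathbb{Q}$ given by Lemma \ref{Girsanov} together with the spine decomposition, the spine $V(w_n)$ is a centred random walk with variance $\sigma^2$. $D_n^{(L)}$ is then a nonnegative martingale, and $D_n^{(L)}/R(L)$ is the Radon--Nikodym derivative of the law in which the spine is conditioned to stay above $-L$. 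Since $\min_{\mathbf{x}} V(\mathbf{x})\to+\infty$ almost surely, on a set of probability tending to $1$ as $L\to\infty$ no particle ever crosses $-L$. On that set $D_\infty^{(L)}$ agrees with $D_\infty$ up to an additive $L W_\infty=0$ correction, using $R(u)\sim u/c_R$.

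\textbf{Step 2: the comparison.} Write $W_n^{(L)}$ for the same truncated sum with weight $e^{-V}$ in place of $R(V+L)e^{-V}$. The goal is
\[
\sqrt n\,W_n^{(L)}-c\,D_n^{(L)}\to 0 \quad\text{in }\mathbb{Q}^{(L)}\text{-probability}, \qquad c=\Bigl(\tfrac{2}{\pi\sigma^2}\Bigr)^{1/2}.
\]
Rewrite $\sqrt n\,W_n^{(L)}/D_n^{(L)}$ as the $\mathbb{Q}^{(L)}$-conditional expectation of $\sqrt n/R(V(w_n)+L)$ given $\mathcal F_n$. The key input is that a random walk conditioned to stay positive satisfies $V(w_n)/\sqrt{n}\Rightarrow$ a Rayleigh (meander) law. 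Computing $\mathbb{E}_{\mathbb{Q}^{(L)}}[\sqrt n/R(V(w_n)+L)]$ with the meander density gives exactly $c$.

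\textbf{Step 3: concentration, the main obstacle.} Step 2 only identifies the mean; the real work is showing the conditional expectation concentrates. I would use the peeling lemma. Fix $k_n\to\infty$ with $k_n\ll n$ and decompose $W_n^{(L)}$ over the particles at generation $k_n$. By Step 1 these particles all have $V\gtrsim k_n^{1/2-\epsilon}$ with high probability. Each subtree contributes $e^{-V(\mathbf{x})}$ times a fresh copy started at height $V(\mathbf{x})$.

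For a single copy, a second-moment computation under $\mathbb{Q}$ shows the contribution is asymptotically $\sqrt{n}^{-1} c\,R(V(\mathbf{x})+L)$ times a factor with conditional variance $o(1)$. Establishing this requires a random-walk estimate (ballot/local-limit bounds) ensuring that the two spines in the pair decomposition split early. This is the delicate part, and I expect it to dominate the proof.

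Summing over the generation-$k_n$ particles and applying a conditional Chebyshev inequality gives $\sqrt n\,W_n^{(L)}=c\,D_{k_n}^{(L)}+o_{\mathbb P}(1)$. Since $D_{k_n}^{(L)}\to D_\infty^{(L)}$, we obtain convergence in probability. Finally, letting $L\to\infty$ via Step 1 yields the claimed limit. The convergence is in probability; almost sure convergence fails by the Hu--Shi fluctuations.
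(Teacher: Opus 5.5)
The paper gives no proof of this statement; it only cites A\"id\'ekon--Shi. The comparison is therefore with their argument, whose architecture you reproduce correctly:
\begin{itemize}
\item the truncated derivative martingale $D^{(L)}_n$;
\item the measure $\mathbb{Q}^{(L)}$ under which the spine is the walk conditioned to stay above $-L$;
\item the identity $W^{(L)}_n/D^{(L)}_n=\mathbb{E}_{\mathbb{Q}^{(L)}}[1/R(V(w_n)+L)\mid\mathcal F_n]$;
\item conditioning at a generation $k_n\ll n$.
\end{itemize}
You are also tacitly replacing the paper's displayed $D_n$ by the true derivative martingale $\sum_{|\mathbf{x}|=n}V(\mathbf{x})e^{-V(\mathbf{x})}$. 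The paper's formula lacks the factor $d^{-n}$ and has $\beta_c^2 n/2$ where $\beta_c^2 n$ is needed. Yours is the right object, but say so explicitly.

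\textbf{Step 2 uses the wrong limit law.} Under $\mathbb{Q}^{(L)}$ the spine is conditioned to stay above $-L$ forever, so its time-$n$ law is size-biased by $R(V(w_n)+L)$. Hence $V(w_n)/\sqrt n$ converges to $\sigma$ times a three-dimensional Bessel process at time one, with density $\sqrt{2/\pi}\,x^2\sigma^{-3}e^{-x^2/(2\sigma^2)}$. It does not converge to the Rayleigh (meander) law. The meander density would give $c_R\sqrt{\pi/(2\sigma^2)}$, off by a factor $\pi/2$.

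The clean route avoids scaling limits. Let $S$ be the one-dimensional walk. Then $\mathbb{E}_{\mathbb{Q}^{(L)}}[\sqrt n/R(V(w_n)+L)]=\sqrt n\,\mathbb{P}(\underline{S}_n\ge -L)/R(L)\to\theta$, using $\mathbb{P}(\underline{S}_n\ge -L)\sim\theta R(L)n^{-1/2}$ and $\theta/c_R=c$. So this mean is $\theta=c\,c_R$, not $c$. Your final constant comes out right only because on the good event of Step 1 one actually has $D^{(L)}_\infty=D_\infty/c_R$, not $D_\infty$.

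\textbf{The genuine gap is Step 3, where the mechanism for concentration is misidentified.}

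First, it is false that all generation-$k_n$ particles lie above $k_n^{1/2-\epsilon}$: $\min_{|\mathbf{x}|=k_n}V(\mathbf{x})$ is only of order $\log k_n$. The diffusive lower bound holds for the $\mathbb{Q}^{(L)}$-spine. All it yields is that particles below $k_n^{1/2-\epsilon}$ carry a vanishing share of $D^{(L)}_{k_n}$.

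Second, and more importantly, a single subtree never concentrates. Take a subtree rooted at height $u$. By the many-to-two formula, the second moment of its rescaled mass is of order $e^{u+L}$, coming from descendants that drift back near the barrier $-L$ and branch there. Its squared mean is only of order $(u+L)^2$. So the ``factor with conditional variance $o(1)$'' does not exist.

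Concentration is an averaging effect over the independent subtrees. The walk killed below $-L$ makes $O(1)$ expected visits near $-L$, uniformly in its starting point. Hence the conditional variance of $\sqrt n\,W^{(L)}_n$ given $\mathcal F_{k_n}$ is at most a constant times
\[
e^{L}W^{(L)}_{k_n}+\max_{|\mathbf{x}|=k_n}e^{-V(\mathbf{x})}(V(\mathbf{x})+L)^2\,D^{(L)}_{k_n},
\]
which tends to $0$ because $W_{k_n}\to 0$ and $\min_{|\mathbf{x}|=k_n}V(\mathbf{x})\to\infty$.

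Even this bound requires $\mathbb{E}[D_1^2]<\infty$. A\"id\'ekon--Shi assume only much weaker hypotheses, essentially $\mathbb{E}[D_1(\log D_1)^2]<\infty$ in this Gaussian model; neither the paper nor you state any. Under those hypotheses the conditional second moment can be infinite. One must then first discard, under $\mathbb{Q}^{(L)}$, configurations in which the spine's brothers carry too much mass. That truncation is what a peeling-type lemma is for; your outline names it but never actually uses it.
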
  

While one may conjecture that a similar scaling result holds in time-inhomogeneous weights, it is worth mentioning the difference between the model in question and the time-homogeneous case. Either in the literature of critical GMC or in BRWs, the behavior of the partition function at criticality is governed by the extreme value (of the Gaussian field/BRWs). As we mentioned before, in the sub-critical regime ($\gamma<\sqrt{2d}$), the GMC measure is almost surely supported on $\gamma$-thick points. At the critical parameter $\gamma=\sqrt{2d}$, it corresponds to the maximal possible thickness:\begin{equation*}
    \lim_{\epsilon\to 0 }\sup_{x\in D} X_\epsilon(x)/|\log\epsilon|\to \sqrt{2d} \text{  in probability}
\end{equation*}

In the literature of BRWs, it was shown in \cite{Ming-Zeitouni} that when the variance is decreasing, the extreme value of BRWs with decreasing variance exhibits different asymptotic behavior from their constant variance counterparts. However, the asymptotics of the extreme value of the time-inhomogeneous BRWs studied in the present paper remains an open problem, which would be interesting to explore in the future.

\subsection{Relationship with Two-Dimensional Directed Polymers}
Recall that we discussed the heuristic relationship between two-dimensional directed polymers and BRWs with decreasing variance in Section \ref{modelBRWs}; it would be interesting to establish a rigorous connection between the time-inhomogeneous BRWs with decreasing variance studied in this paper and the log-partition function of two-dimensional directed polymers. In fact, this heuristic has been confirmed in \cite{cosco2025maximumdimensionalgaussiandirected} for the extreme values of $h_N(x)$ (the partition function under certain scaling and normalization), and raises an intriguing open problem: whether the two-dimensional polymer measure defined by
\begin{equation*}
\mu_N^\beta:=\frac{e^{\beta h_N}}{\mathbb{E}\left[e^{\beta h_N}\right]}
\end{equation*}
converges to a GMC measure. The universality of partition functions in the present paper (Theorem \ref{Main2}) suggests an affirmative answer to this problem (posed in \cite{cosco2025maximumdimensionalgaussiandirected}), though additional technical work is required to prove this rigorously.

\subsection{GMC Beyond Log-correlation}

 An interesting implication of Theorem \ref{Main2} is that if we embed the (binary) BRW into the unit interval $[0,1]$, the covariance structure $K(x,y)$ (for $x,y\in [0,1]$) differs from that of GMC (which takes the form $-\log |x-y|+O(1)$): it is not log-correlated. The universality behavior we established enables us to investigate whether we can construct a GMC-type object beyond log-correlated fields, which defines a variant of GMC measure. 

\subsection{Other Related Models}

A particularly natural extension is the continuous time analogy of BRWs: the time-inhomogeneous Branching Brownian Motions (BBM). BBM were studied earlier than BRWs and are of independent interest (see \cite{Millard16} for a precise introduction for its time-inhomogeneous counterpart). Indeed, many results in branching random walks are known in the case of the branching Brownian motion,  and the modifications of methods are more or less painless (and sometimes even neater). Conversely, once a result is established in Branching Brownian motions, it was often believed that a corresponding result holds in the context of BRWs (though not always straightforward to extend). 

On the other hand, the model of BRWs has recently been viewed as a specific example of directed polymers on infinite graphs (see \cite{Cosco_2021} for an introduction to this topic), and we hope our results can be extended to other related models, such as $\lambda$-biased BRWs on Galton-Watson trees. Finally, it is also interesting to extend the results to environments that are not independent: i.e., time-correlated random fields. There are more interesting dependence of time, compared to macroscopic decay in variance discussed in the present paper in the modern literature. For example, the study of the influence of environmental fluctuations, or stochastic impurities, on directed polymers has been initiated in \cite{chen2024localizationregimehighdimensionaldirected}. The authors sincerely hope that more interesting properties and phenomena will be discovered in the time-inhomogeneous settings of these models.

\printbibliography[
  title={References},  
  heading=bibnumbered  
]
\end{document}